\newif\ifpreprint
\preprinttrue  %

\ifpreprint

\documentclass[11pt]{article}
\usepackage{fullpage}
\emergencystretch 3em

\title{Learning Acceleration Algorithms for Fast Parametric \\Convex Optimization with Certified Robustness}

\author{Rajiv Sambharya, Jinho Bok, Nikolai Matni, and George Pappas}
\date{%
    University of Pennsylvania\\
    \today
}

\usepackage[round,authoryear]{natbib}
\bibliographystyle{plainnat}

\usepackage{amsmath,enumitem,mathtools,amsthm,amssymb}

\renewcommand\arraystretch{1.2}

\newtheorem{theorem}{Theorem}

\newtheorem{lemma}[theorem]{Lemma}
\newtheorem{example}[theorem]{Example}
\newtheorem{proposition}[theorem]{Proposition}

\newtheorem{assumption}[theorem]{Assumption}

\newtheorem{definition}{Definition}[section]

\usepackage[colorlinks,
            linkcolor=cyan,
            citecolor=cyan,
            urlcolor=cyan,
            bookmarks]{hyperref}

\else %

\documentclass[final,onefignum,onetabnum]{siamart250211}

\usepackage{lipsum}
\usepackage{amsfonts}
\usepackage{graphicx}
\usepackage{epstopdf}
\usepackage{bibentry}
\ifpdf
  \DeclareGraphicsExtensions{.eps,.pdf,.png,.jpg}
\else
  \DeclareGraphicsExtensions{.eps}
\fi

\usepackage{enumitem}
\setlist[enumerate]{leftmargin=.5in}
\setlist[itemize]{leftmargin=.5in}

\newsiamremark{remark}{Remark}
\newsiamremark{hypothesis}{Hypothesis}
\crefname{hypothesis}{Hypothesis}{Hypotheses}
\newsiamthm{claim}{Claim}

\headers{Learning Robust Acceleration Methods for Convex Optimization}{R. Sambharya, J. Bok, N. Matni, and G. Pappas}

\title{Learning Acceleration Algorithms for Fast Parametric Convex Optimization with Certified Robustness\thanks{Submitted to the editors 08/13/2025.}}

\author{Rajiv Sambharya\thanks{University of Pennsylvania, Philadelphia, PA (\email{sambhar9@seas.upenn.edu}, \\ \email{jinhobok@wharton.upenn.edu}, \email{nmatni@seas.upenn.edu}, \email{pappasg@seas.upenn.edu})} \and Jinho Bok\footnotemark[2] \and Nikolai Matni\footnotemark[2] \and George Pappas\footnotemark[2]}

\usepackage{amsopn}

\let\citep\cite
\let\citet\cite

\usepackage{amssymb}
\usepackage{cite}
\externaldocument[][nocite]{supplement}
\newtheorem{example}[theorem]{Example}
\bibliographystyle{siamplain}
\newtheorem{assumption}[theorem]{Assumption}

\renewcommand\arraystretch{1.2}

\setlength{\floatsep}{8pt}     %
\setlength{\textfloatsep}{8pt} %
\setlength{\intextsep}{8pt}     %

\usepackage[skip=8pt]{caption}
\captionsetup[figure]{font=small}
\captionsetup[table]{font=small}

\usepackage{graphicx}
\usepackage{subcaption}
\usepackage{hyperref}
\fi
\usepackage{tikz}
\usetikzlibrary{positioning}
\usepackage{verbatim}
\usepackage{multicol}
\usepackage{algorithm}%
\usepackage[noend]{algpseudocode} %
\usepackage{bm}

\usepackage{subcaption}
\usepackage{makecell}
\usepackage{csvsimple}	%
\usepackage{booktabs}   %
\usepackage[flushleft]{threeparttable}  %
\usepackage[round-mode=places, round-integer-to-decimal, round-precision=4,
    table-format = 1.4,
    table-number-alignment=center,
    round-integer-to-decimal]{siunitx}
\usepackage{adjustbox}  %

\newcommand{\bnote}[1]{}
\renewcommand{\bnote}[1]{\textcolor{red}{\textbf{B. #1}}}

\usepackage{etoolbox}

\newcommand{\fplen}{n}

\DeclareMathOperator*{\argmin}{argmin}

\newcommand*{\startlegend}{-0.2}
\newcommand*{\enlegend}{0.7}

\DeclareDocumentCommand{\T}{ O{z} O{} }{T\IfValueT{#2}{(#1,\theta_{#2})}\IfNoValueT{#2}{(#1,\theta)}}
\DeclareDocumentCommand{\Tj}{ O{k} O{z} O{} }{T^{#1}\IfValueT{#3}_{\theta_{#3}}{(#2)}}

\DeclareDocumentCommand{\CB}{ O{} }{C_{B_{\IfValueTF{#1}{\theta_{#1}}{\theta}}}}
\DeclareDocumentCommand{\RB}{ O{} }{R_{B_{\IfValueTF{#1}{\theta_{#1}}{\theta}}}}

\newcommand{\eg}{{\it e.g.}}
\newcommand{\ie}{{\it i.e.}}

\newcommand{\ones}{\mathbf 1}
\newcommand{\reals}{{\mbox{\bf R}}}

\newcommand{\symm}{{\mbox{\bf S}}}  %

\newcommand{\Tr}{\mathop{\bf tr}}
\newcommand{\diag}{\mathop{\bf diag}}

\newcommand{\prox}{\textbf{prox}}

\newcommand{\Sec}{Section}
\newcommand{\Subsec}{Subsection}

\newcommand{\cblock}[3]{
  \hspace{-1.5mm}
  \begin{tikzpicture}
    [
    node/.style={square, minimum size=10mm, thick, line width=0pt},
    ]
    \node[fill={rgb,255:red,#1;green,#2;blue,#3}] () [] {};
  \end{tikzpicture}%
}

\newcommand{\ccircle}[3]{%
  \raisebox{0.55\height}{
  \begin{tikzpicture}[baseline=(node.base)]%
    \node[circle, fill={rgb,255:red,#1;green,#2;blue,#3}, yshift=14] (node) at (0,1) {};%
  \end{tikzpicture}%
  }
}

\newcommand{\linestraight}[3]{%
  \raisebox{0.5\height}{
  \begin{tikzpicture}%
    \draw[line width=2.5pt, color={rgb,255:red,#1;green,#2;blue,#3}] (0,-.1) -- (0.75,-.1);
  \end{tikzpicture}%
  }
}

\newcommand{\linediamond}[3]{%
  \begin{tikzpicture}[baseline={(0,-.1)}]
    \coordinate (A) at (0.15,0);
    \coordinate (B) at (.25,-.1);
    \coordinate (C) at (.35,0);
    \coordinate (D) at (.25,.1);
    \draw[draw=none, fill={rgb,255:red,#1;green,#2;blue,#3}] (A) -- (B) -- (C) -- (D) -- cycle;
    \draw[line width=0.8pt, color={rgb,255:red,#1;green,#2;blue,#3}] (\startlegend,0) -- (0.6,0);
  \end{tikzpicture}%
  }

\usetikzlibrary{shapes.geometric}

\newcommand{\linecircle}[3]{%
  \begin{tikzpicture}[baseline={(0,-.1)}]
    \draw[draw=none, fill={rgb,255:red,#1;green,#2;blue,#3}](0.25, 0) circle (.08);
    \draw[line width=0.8pt, color={rgb,255:red,#1;green,#2;blue,#3}] (\startlegend,0) -- (\enlegend,0);
  \end{tikzpicture}%
  }

\newcommand{\linesquare}[3]{%
  \begin{tikzpicture}[baseline={(0,-.1)}]
    \draw[draw=none, fill={rgb,255:red,#1;green,#2;blue,#3}](0.179, -.071) rectangle (.321,.071);
    \draw[line width=0.8pt, color={rgb,255:red,#1;green,#2;blue,#3}] (\startlegend,0) -- (\enlegend,0);
  \end{tikzpicture}%
  }

\newcommand{\linelefttri}[3]{%
  \begin{tikzpicture}[baseline={(0,-.1)}]
    \coordinate (A) at (0.17,0);
    \coordinate (B) at (.3,-.1);
    \coordinate (C) at (.3,.1);
    \draw[draw=none, fill={rgb,255:red,#1;green,#2;blue,#3}] (A) -- (B) -- (C) -- cycle;
    \draw[line width=0.8pt, color={rgb,255:red,#1;green,#2;blue,#3}] (\startlegend,0) -- (\enlegend,0);
  \end{tikzpicture}%
}

\newcommand{\linerighttri}[3]{
  \begin{tikzpicture}[baseline={(0,-.1)}]
    \coordinate (A) at (0.33,0);
    \coordinate (B) at (.2,-.1);
    \coordinate (C) at (.2,.1);
    \draw[draw=none, fill={rgb,255:red,#1;green,#2;blue,#3}] (A) -- (B) -- (C) -- cycle;
    \draw[line width=0.8pt, color={rgb,255:red,#1;green,#2;blue,#3}] (\startlegend,0) -- (\enlegend,0);
  \end{tikzpicture}%
}

\newcommand{\linedowntri}[3]{
  \begin{tikzpicture}[baseline={(0,-.1)}]
    \coordinate (A) at (0.25,-.09);
    \coordinate (B) at (.17,.06);
    \coordinate (C) at (.33,.06);
    \draw[draw=none, fill={rgb,255:red,#1;green,#2;blue,#3}] (A) -- (B) -- (C) -- cycle;
    \draw[line width=0.8pt, color={rgb,255:red,#1;green,#2;blue,#3}] (\startlegend,0) -- (\enlegend,0);
  \end{tikzpicture}%
}

\newcommand{\lineuptri}[3]{
  \begin{tikzpicture}[baseline={(0,-.1)}]
    \coordinate (A) at (0.25,.09);
    \coordinate (B) at (.17,-.06);
    \coordinate (C) at (.33,-.06);
    \draw[draw=none, fill={rgb,255:red,#1;green,#2;blue,#3}] (A) -- (B) -- (C) -- cycle;
    \draw[line width=0.8pt, color={rgb,255:red,#1;green,#2;blue,#3}] (\startlegend,0) -- (\enlegend,0);
  \end{tikzpicture}%
}

\newcommand{\rkfvisualslegend}{\vspace{-1mm} \small \\
\ccircle{253}{166}{167} Noisy measurements \quad
\ccircle{6}{165}{3} Optimal solution \\
\cblock{0}{0}{255} LAH Accel w/ robustness \quad
\cblock{255}{189}{8} LAH  \quad
\cblock{150}{150}{150} No learning
}

\newcommand{\legendlogistic}{\vspace{-1mm} \small 
\lineuptri{0}{0}{0} Nesterov \hspace{1mm}
  \linelefttri{166}{86}{40} nearest neighbor \hspace{1mm}
  \linecircle{228}{26}{28} LAH \hspace{1mm}
  \linesquare{55}{126}{184} LAH Accel
}

\newcommand{\legendrkf}{\vspace{-1mm} \small 
\lineuptri{0}{0}{0} cold start \hspace{1mm}
  \linelefttri{166}{86}{40} nearest neighbor \hspace{1mm} 
  \linedowntri{77}{175}{74} previous solution \hspace{1mm}\\
  \linediamond{153}{102}{153} LM \hspace{1mm}
  \linecircle{228}{26}{28} LAH \hspace{1mm}
  \linesquare{55}{126}{184} LAH Accel
}

\newcommand{\legendnonnegls}{\vspace{-1mm} \small 
\linedowntri{0}{0}{0} cold start \hspace{1mm}
  \linelefttri{166}{86}{40} nearest neighbor \hspace{1mm} 
  \linecircle{228}{26}{28} LAH \hspace{1mm}
  \linesquare{55}{126}{184} LAH Accel
}

\newcommand{\legendquad}{\vspace{-1mm} \small 
\linedowntri{0}{0}{0} cold start \hspace{1mm}
  \linelefttri{166}{86}{40} nearest neighbor \hspace{1mm} 
  \lineuptri{77}{175}{74} previous solution \hspace{1mm}
  \linesquare{55}{126}{184} LAH Accel
}

\newcommand{\legendlasso}{\vspace{-1mm} \small 
\lineuptri{0}{0}{0} Nesterov \hspace{1mm}
  \linelefttri{166}{86}{40} nearest neighbor \hspace{1mm} \\
  \linerighttri{152}{78}{163} LAH safeguarded \hspace{1mm}
  \linecircle{228}{26}{28} LAH no safeguard
  \linesquare{55}{126}{184} LAH Accel
}

\newcommand{\legendlogisticstep}{\vspace{-1mm} \small
  \linestraight{251}{124}{163} $2 / L$ for step sizes and $1$ for momentum sizes \hspace{2mm}
}

\newcommand{\iters}{\small{Mean iterations to reach a given primal and dual residual (Tol.)}}

\newcommand{\itersunconstrained}{\small{Mean iterations to reach a given suboptimality (Tol.)}}

\newenvironment{talign*}
 {\csname align*\endcsname}
 {\endalign}

\newcommand{\tableheader}{
  \begin{tabular}{@{}c@{}}Tol. \\\end{tabular}
  &
\begin{tabular}{@{}c@{}}Nesterov\end{tabular}
&
\begin{tabular}{@{}c@{}}Nearest \\ Neighbor\end{tabular}
&
\begin{tabular}{@{}c@{}}L2WS \\ $N=10$\end{tabular}
&
\begin{tabular}{@{}c@{}}L2WS \\ $N=10000$\end{tabular}
&
\begin{tabular}{@{}c@{}}LM \\ $N=10$\end{tabular}
&
\begin{tabular}{@{}c@{}}LM \\ $N=10000$\end{tabular}
&
\begin{tabular}{@{}c@{}}LAH \end{tabular}
&
\begin{tabular}{@{}c@{}}LAH \\ Robust\end{tabular}
\\}

\newcommand{\colnames}{\colA & \colB & \colC &\colD &\colE & \colF & \colG & \colH & \colI}

\newcommand{\myCSVReaderRed}[1]{%
\tableheader
      \midrule
    \csvreader[
        head to column names,
        late after line=\\
    ]{#1}{ %
        accuracies=\colA,
        cold_start_red=\colC,
        nearest_neighbor_red=\colD,
        maml_red=\colO,
        obj_k0_red=\colE,
        obj_k5_red=\colF,
        obj_k15_red=\colG,
        obj_k30_red=\colH,
        obj_k60_red=\colI,
        reg_k0_red=\colJ,
        reg_k5_red=\colK,
        reg_k15_red=\colL,
        reg_k30_red=\colM,
        reg_k60_red=\colN,
    }{\colnames}
}

\newcommand{\myCSVReaderRedAlt}[2]{%
    \tableheaderAlt{#1}
      \midrule
    \ifstrequal{#1}{MAML}{%
    \csvreader[
        head to column names,
        late after line=\\
    ]{#2}{ %
        accuracies=\colA,
        cold_start_red=\colC,
        nearest_neighbor_red=\colD,
        maml_red=\colO,
        obj_k0_red=\colE,
        obj_k1_red=\colF,
        obj_k5_red=\colG,
        obj_k15_red=\colH,
        obj_k60_red=\colI,
        reg_k0_red=\colJ,
        reg_k1_red=\colK,
        reg_k5_red=\colL,
        reg_k15_red=\colM,
        reg_k60_red=\colN,
    }{\colnames}
    }
    {
      \csvreader[
        head to column names,
        late after line=\\
    ]{#2}{ %
        accuracies=\colA,
        cold_start_red=\colC,
        nearest_neighbor_red=\colD,
        prev_sol_red=\colO,
        obj_k0_red=\colE,
        obj_k1_red=\colF,
        obj_k5_red=\colG,
        obj_k15_red=\colH,
        obj_k60_red=\colI,
        reg_k0_red=\colJ,
        reg_k1_red=\colK,
        reg_k5_red=\colL,
        reg_k15_red=\colM,
        reg_k60_red=\colN,
    }{\colnamesprevsol}
    }
}

\newcommand{\useCSVReaderRed}[2]{%
    \ifdefined\currentCSVReaderRed
        \expandafter\csname\currentCSVReaderRed\endcsname{#1}{#2}%
    \else
        \PackageWarning{YourPackage}{No CSV reader command defined, defaulting to \myCSVReaderRed}%
        \myCSVReaderRed{#1}{#2}%
    \fi
}

\makeatletter
\renewcommand{\eqref}[1]{\textup{\tagform@{\ref{#1}}}}
\makeatother

\begin{document}
\maketitle

\begin{abstract}
    We develop a machine-learning framework to learn hyperparameter sequences for accelerated first-order methods (\eg, the step size and momentum sequences in accelerated gradient descent) to quickly solve parametric convex optimization problems with certified robustness.
    We obtain a strong form of robustness guarantee---certification of worst-case performance over all parameters within a set after a given number of iterations---through regularization-based training.
    The regularization term is derived from the performance estimation problem (PEP) framework based on semidefinite programming, in which the hyperparameters appear as problem data. 
    We show how to use gradient-based training to learn the hyperparameters for several first-order methods: accelerated versions of gradient descent, proximal gradient descent, and alternating direction method of multipliers.
    Through various numerical examples from signal processing, control, and statistics, we demonstrate that the quality of the solution can be dramatically improved within a budget of iterations, while also maintaining strong robustness  guarantees.
    Notably, our approach is highly data-efficient in that we only use ten training instances in all of the numerical examples.

\end{abstract}

\newcommand{\myparagraph}[1]{%
  \paragraph{#1\ifpreprint\unskip.\fi}%
}

\newcommand{\myparagraphstar}[1]{%
  \par\vspace{1ex}\noindent\textbf{#1\ifpreprint.\fi}\hspace{1em plus 0.5em}%
}

\ifpreprint \else
\begin{keywords}
Learning to optimize, Convex optimization, Acceleration, Performance estimation
\end{keywords}
\fi

\section{Introduction}\label{sec:intro}
In this work, we study \emph{parametric convex optimization problems}, where the goal is to repeatedly solve convex problems whose objective or constraints depend on a varying \emph{problem parameter}.
Such problems appear across many fields.
For instance, in model predictive control (MPC), we repeatedly solve an optimization problem at each sampling instance, treating the current state as the problem parameter that determines the next control input~\citep{borrelli_mpc_book}.
Similarly, in signal processing, we repeatedly solve a convex problem in which the new measurement vector is the problem parameter used to reconstruct the underlying signal while the measurement matrix remains fixed~\citep{lista}.
It turns out that these parametric convex optimization problems can typically be reformulated as \emph{parametric fixed-point problems} of the following form via their optimality conditions~\citep{mon_primer,osqp,scs_quadratic}:
\begin{equation}\label{prob:fp}
  \mbox{find} \; z \quad \mbox{ such that } \quad  z = T(z, x),
\end{equation}
where $z \in \reals^\fplen$ is the decision variable, $x \in \reals^d$ is the problem parameter, and $T : \reals^\fplen \times \reals^d \rightarrow \reals^\fplen$ is a known mapping.
First-order methods which only use gradients and subgradients are popular methods to solve convex optimization problems, including ones of the form~\eqref{prob:fp}~\citep{bauschke2011convex,fom_book, lscomo}.
Popular examples include gradient descent, proximal gradient descent~\citep{prox_algos}, and the alternating direction method of multipliers (ADMM)~\citep{dr_splitting,Boyd_admm}.
These methods typically take the form of \emph{fixed-point iterations} which repeatedly apply the operator $T$, giving the iterates
\begin{equation}\label{prob:fp_algo}
  z^{k+1}(x) = T(z^k(x),x).
\end{equation}
Under certain conditions on the operator $T$ (\eg, if $T$ is averaged or contractive), which hold for a wide variety of algorithms used in convex optimization, the iterates obtained through algorithm~\eqref{prob:fp_algo} are guaranteed to converge to an optimal solution; \ie, there exists a solution $z^\star(x)$ of~\eqref{prob:fp} such that $z^k(x) \to  z^\star(x)$~\citep[\Sec~2.4]{lscomo}.

Despite their popularity, first-order methods are known to suffer from slow convergence~\citep{anderson_acceleration,zhang2020globally}.
In many applications, we only have the time to run a finite number of iterations---\eg, in model predictive control, where each problem needs to be solved within milliseconds~\citep{borrelli_mpc_book}. 
Therefore, it is necessary to develop methods that can yield a high-quality solution within a limited amount of time.
On one hand, momentum-based \emph{acceleration methods} that combine past iterates to obtain the next one have been developed for faster convergence~\citep{accel_survey}.
A celebrated result in convex optimization is that the accelerated versions of gradient descent~\citep{nesterov} and of proximal gradient descent~\citep{fista} based on momentum provably improve the convergence rate over their unaccelerated versions. 
On the other hand, these acceleration methods are designed for a large function class (\eg, all convex and $L$-smooth functions), and are not tailored to the parametric problems of our interest.
Hence, it may still be possible to find algorithms that improve on general-purpose acceleration methods like Nesterov's method over the parametric family.
Moreover, for the more general case of fixed-point iterations, different acceleration schemes such as Anderson acceleration~\citep{anderson1965iterative,anderson_acceleration} are an active area of research despite their lack of general worst-case guarantees.

\emph{Learning to optimize} is a paradigm that leverages the parametric nature of problem~\eqref{prob:fp} to design tailored algorithms that solve such parametric problems quickly~\citep{l2o,amos_tutorial}. 
In this paradigm, the problem parameter $x$ is assumed to be drawn from a distribution $\mathcal{D}$, and the training dataset consists of samples $\{x_i\}_{i=1}^N$ where each parameter $x_i$ is drawn i.i.d. from  $\mathcal{D}$~\citep{l2o,amos_tutorial}. 
This viewpoint naturally casts machine learning problems (\eg, learning initializations or update rules) for the underlying parametric problem.
Learning to optimize has seen success over many domains: \eg, sparse coding~\citep{lista,alista}, meta learning~\citep{maml}, optimal power flow~\citep{fioretto2020predicting}, and---most relevant to this work---convex optimization~\citep{learn_algo_steps}. 

In this paper, we address two limitations of the current literature on learning to optimize for convex optimization. 
\emph{First, it remains open how to obtain high-quality solutions given a finite budget of iterations and a low amount of training data.}
In the literature, various techniques have been proposed to achieve this---\eg, learning metrics for operator-splitting methods~\citep{metric_learning}, learning warm starts~\citep{l2ws}, learning algorithm updates using reinforcement learning~\citep{rlqp}, and learning surrogate problems~\citep{kolouri_learn_cons}.
Yet, these methods are typically not data-efficient in that they require thousands of data points. %
One strategy that has shown to be highly data-efficient is to \emph{only} learn the algorithm hyperparameter sequence, such as the step size sequence in gradient descent~\citep{learn_algo_steps}.
Since momentum is a popular strategy to accelerate first-order methods, it is natural to consider extending it to the learning to optimize framework.

\emph{Second, learned optimizers typically lack worst-case guarantees within a given number of iterations.}
Several works establish generalization guarantees, aiming to ensure that learned optimizers perform well on unseen instances drawn from the training distribution~\citep{balcan_gen_guarantees,sambharya2024data,Sucker2024LearningtoOptimizeWP}.
However, these approaches typically rely on the i.i.d. assumption, which in many real-world scenarios is not realistic or hard to verify.
Other works establish asymptotic convergence guarantees~\citep{safeguard_convex, banert2021accelerated}, which certify that the iterates of the learned optimizer will converge to an optimal solution in the limit.
Yet, such guarantees do not generally give worst-case guarantees given a finite number of iterations.
Given a limited iteration budget from time constraints, certifying such worst-case performance becomes critical.

\myparagraph{Contributions}
In this paper, we propose a framework to solve parametric convex optimization problems quickly while maintaining robustness. 
As our main contributions, (i) we present a framework to learn the hyperparameters of {\bf acceleration} algorithms for a variety of first-order methods, and (ii) we adapt our training problem so that {\bf robustness} with respect to the problem parameter (\ie, numerical worst-case guarantees within a pre-defined number of iterations for all parameters within a given set) can be achieved.
Our key contributions are as follows: 
\begin{itemize}[left=5pt]
  \item {\bf Learning acceleration hyperparameters framework.} %
  We present a machine-learning framework to learn the hyperparameters of momentum-based first-order methods within a provided budget of iterations. 
  In (proximal) gradient descent, we learn the sequence of step sizes and momentum values.
  In the alternating direction method of multipliers (ADMM) and two ADMM-based solvers---the Operator Splitting Quadratic Program (OSQP)~\citep{osqp} and the Splitting Conic Solver (SCS)~\citep{scs_quadratic}---we learn the sequence of relaxation and momentum values, along with a few time-invariant hyperparameters.
  \item {\bf Robustness.} %
  Provided the hyperparameters of these acceleration schemes, we show how to provide numerical worst-case guarantees over a large function class (\eg, minimizing an objective that is convex and $L$-smooth) within a given finite number of iterations.
  This is done by the performance estimation problem (PEP)~\citep{pep} framework, which provides worst-case guarantee by solving a semidefinite program (SDP) where the learned hyperparameters appear as problem data.
  Notably, this gives a worst-case guarantee over all parameters $x$ within a given set $\mathcal{X}$, provided a certain property on the corresponding function class.
  We then show how to train our learned optimizer to achieve a desired level of this worst-case guarantee.
  In order to use gradient-based methods to train our method, we show how to differentiate the objective value of the PEP problem with respect to these hyperparameters.
  \item {\bf Numerical experiments.} We showcase the effectiveness of our approach on a wide variety of numerical experiments from control, signal processing, and statistics.
  We show that acceleration by momentum dramatically improves performance, and that our robustness formulation provides strong worst-case guarantees.
  In most of our examples, our robustness guarantees hold for all possible parameters: \ie, $\mathcal{X} = \reals^d$.
  In some examples, we illustrate the importance of robustness by applying different approaches to out-of-distribution problem instances.
  Our approach is highly data-efficient in that we only use $10$ training instances for each numerical example, while each decision variable is of dimension at least $500$ in all examples.
\end{itemize}

\myparagraph{Layout of paper}
In \Sec~\ref{sec:related_work}, we review related work.
In \Sec~\ref{sec:framework}, we present our framework to learn the acceleration hyperparameters for a variety of first-order methods.
In \Sec~\ref{sec:robustness}, we show how the learned hyperparameters can be used to derive worst-case guarantees using PEP and then how to augment the training problem to encourage this robustness.
In \Sec~\ref{sec:numerical_experiments} we illustrate the effectiveness of our approach with numerical examples, and in \Sec~\ref{sec:conclusion} we conclude.

\myparagraph{Notation}
We denote the set of vectors of length $n$ consisting of real values, nonnegative values, and positive values with $\reals^n$, $\reals^n_+$, and $\reals^n_{++}$ respectively.
We denote the set of positive definite matrices of size $n \times n$ with $\symm^n_{++}$.
For a convex function $g : \reals^n \rightarrow \reals \cup \{+\infty\}$, we denote the proximal operator as $\prox_{g}(v) = \argmin_x g(x) + (1/2)\|x - v\|_2^2$.
For $0 \leq \mu < L \leq \infty$, we denote the set of all $\mu$-strongly convex and $L$-smooth functions as $\mathcal{F}_{\mu,L}$,
and such quadratic functions as $\mathcal{Q}_{\mu,L} = \{z \mapsto (1/2) z^T Q z + c^T z + d \mid Q=Q^T, \mu I \preceq Q \preceq L I\}$.
We denote the set of all $M$-Lipschitz operators as $\mathcal{T}_M$.
For a vector $v \in \reals^n$, we denote the element-wise positive part and sign function as $v_+$ and ${\bf sign}(v)$ respectively.
For a vector $v \in \reals^n$, we define $\textbf{diag}(v)$ as the diagonal matrix with $v$ on the diagonal.
We define the norm of a vector $v \in \reals^n$ with respect to a matrix $R \in \symm^n_{++}$ to be $\|v\|_R = \sqrt{v^T R v}$.
For a convex cone $\mathcal{K}$, we denote its dual cone with $\mathcal{K}^*=\{w \mid w^T z \geq 0, z \in \mathcal{K}\}$.

\section{Related work}\label{sec:related_work} 
\myparagraph{Learning for convex optimization}
A variety of approaches have been developed for learning for convex optimization, including: learning algorithm steps with reinforcement learning~\citep{rlqp}, learning metrics for operator-splitting algorithms~\citep{metric_learning}, learning acceleration steps with recurrent neural network models~\citep{neural_fp_accel_amos}, and learning algorithm updates that are close to known convergent algorithms~\citep{banert2021accelerated}.
A common issue with these methods is that they typically require thousands of training instances, and do not come with worst-case guarantees within a finite number of iterations.
By \emph{only} learning a low number of hyperparameters in each iteration, we find that our method is highly data-efficient and also is amenable to a worst-case analysis based on PEP.

Another line of work learns certain maps from the problem parameter to the initial point~\citep{l2ws_l4dc, l2ws}.
While these methods inherit the same worst-case guarantees as their non-learned counterparts, the maps are often between high-dimensional objects---from parameter in $\reals^d$ to initial point in $\reals^n$.
Such dimensionality may incur poor generalization and require a large amount of training instances~\citep{l2ws}.
Additionally, these methods leave the algorithm untouched; in contrast, learning the algorithm's internal hyperparameters (\eg, step sizes or momentum terms) offers flexibility and control throughout the optimization process.

Our work builds off the work of~\citep{learn_algo_steps}, which only learns a few algorithm hyperparameters in each iteration and is shown to be highly data-efficient.
The main differences of this work are that (i) we also learn hyperparameters that enable momentum-based acceleration and (ii) we augment our training procedure with a regularization term that allows us to obtain worst-case guarantees for all parameters within a set.
Additionally, we enforce some hyperparameters to remain time-invariant in ADMM; this allows us to use PEP to obtain worst-case guarantees while keeping the algorithm to be computationally tractable.

\myparagraph{Learning beyond convex optimization}
The idea of learning to optimize has been explored beyond convex optimization---\eg, to solve inverse and non-convex problems. We defer the readers to the excellent surveys~\citet{l2o,amos_tutorial} for a comprehensive overview on this active line of research. 
We note that this work was in part inspired by certain setting-specific approaches that only learn a few hyperparameters in each iteration, such as for sparse coding~\citep{ablin2019learning, alista} and robust PCA~\citep{cai2021learned}.

\myparagraph{Generalization guarantees for learned optimizers}
Generalization bounds guarantee that a learned optimizer will perform well on unseen problem instances with high probability~\citep{balcan_gen_guarantees,sambharya2024data,Sucker2024LearningtoOptimizeWP}. 
These results are often derived using tools from statistical learning theory, and in some cases the training objective is the generalization bound itself~\citep{sambharya2024data,Sucker2024LearningtoOptimizeWP}.
These results, however, are probabilistic and thus the performance of a learned optimizer can be arbitrarily poor for a particular instance.
They also rest on the i.i.d. assumption, which is often unrealistic (\eg, in control applications where problems are solved sequentially; see~\citet{borrelli_mpc_book}) or difficult to verify in practice. %

\myparagraph{Convergence guarantees for learned optimizers}
Convergence guarantees certify that the iterates of the learned optimizer will eventually reach an optimal solution~\citep{amos_tutorial}.
Such guarantees can be enforced through safeguarding methods~\citep{safeguard_convex}, by constraining iterates to remain within provably convergent regions~\citep{banert2021accelerated,martin2024learning}, or by greedy regularization schemes~\citep{fahy2024greedy}.
Yet, these asymptotic guarantees do not directly translate to guarantees in a finite number of iterations.

Moreover, these approaches typically aim to ensure that every individual step makes progress in terms of minimizing the objective.
However, recent theoretical results (\eg, \citet{grimmer2024provably, altschuler2023acceleration_str_cvx}) demonstrated that the overall progress can be improved even if not all individual steps are making progress. Notably, such works use step sizes that are occasionally very large---a pattern similarly observed for learned step sizes~\citep{learn_algo_steps}. While such large step sizes are individually suboptimal, they improve performance when combined with other step sizes. Our framework does not limit each step to make progress and thus can capture such phenomenon, enabling further acceleration.

\myparagraph{Worst-case guarantees for parametric optimization}
In this paper, we seek a strong form of robustness guarantee---certification of worst-case performance for all parameters within a given set after a given number of iterations of a first-order method.
This certification problem has been studied via an exact mixed-integer programming formulation~\citep{ranjan2024exact} and an SDP relaxation~\citep{perfverifyqp}.
However, their sizes scale with the dimension of the problem, and both are specifically tailored to quadratic programs.
We overcome this by using the PEP framework, which is dimension-independent and applies to more general function classes. 

\myparagraph{Performance estimation}
The PEP framework~\citep{pep} is a computer-assisted analysis that computes the worst-case guarantee for a first-order method within a provided number of iterations by solving an SDP. 
This framework has numerous applications, such as obtaining improved convergence rates and designing new algorithms; see \citet{accel_survey, taylorhabilitation} for an overview.
Notably, PEP provides a \emph{tight} worst-case guarantee for various function classes: \eg, functions that are (strongly) convex and smooth \citep{pep2}, quadratic \citep{bousselmi2024interpolation}, or nonexpansive \citep{ryu_ospep}. 
In this work, we regularize our training problem with the optimal value of the SDP that is derived from PEP, thereby certifying worst-case guarantees for parameters within a given set. %

\section{Learning acceleration hyperparameters framework}\label{sec:framework}

In this section, we introduce our approach to learn the acceleration hyperparameters of several first-order methods.
Throughout, we denote the learned hyperparameters (also called the weights) by $\theta$.
In \Subsec~\ref{subsec:running}, we show how to run the learned optimizer provided the weights $\theta$, and then in \Subsec~\ref{subsec:training} we show how to train the weights $\theta$ in order to minimize a performance loss.
Later, in \Sec~\ref{sec:robustness}, we will show how to certify worst-case guarantees based on the hyperparameters $\theta$ and then how to augment the training problem in order to reach a desired worst-case guarantee.

\subsection{Accelerated algorithms and hyperparameters}\label{subsec:running}
In this subsection, we show how to run our learned optimizer provided the weights $\theta$.
The weights $\theta$ are broken into two groups: \emph{time-varying} hyperparameters denoted by $\theta^k \in \reals^2$ for the $k$-th iteration, and \emph{time-invariant} hyperparameters denoted by $\theta^{\rm inv} \in \reals^{n^{\rm inv}}$.
Thus, the full set of weights is given by $\theta = (\theta^0,\dots,\theta^{K-1}, \theta^{\rm inv})$.
We consider the following form for accelerated algorithms: 
\begin{equation}\label{eq:accelerated_iterates}
        y^{k+1}_\theta(x) = T_{\phi^k}(z^k_\theta(x), x), \quad
        z^{k+1}_\theta(x) = y^{k+1}_\theta(x) + \beta^k\left(y^{k+1}_\theta(x) - y^{k}_\theta(x)\right).
\end{equation}
Here, $T_{\phi^k}$ denotes the base fixed-point operator from~\eqref{prob:fp_algo}, parameterized by weights $\phi^k = (\alpha^k,\theta^{\rm inv})$.
We apply our framework to gradient descent, proximal gradient descent, ADMM, and two ADMM-based solvers: OSQP and SCS.
In all cases, the time-varying weights at the $k$-th step are $\theta^k = (\alpha^k,\beta^k)$.
For each of these, we provide the learned accelerated variant in Table~\ref{table:fp_algorithms}. 
The subscript $\theta$ is used to emphasize that the iterates depend on the weights.

\begin{table}[!h]
    \centering
    \caption{Several popular first-order methods and their hyperparameters in our learned acceleration framework.
    The iterates we write are the \emph{accelerated} versions of the base algorithms and fall under the general form of~\eqref{eq:accelerated_iterates}.
    See Appendix \Subsec~\ref{sec:fom_details} for further details.
    }
    \label{table:fp_algorithms}
    \adjustbox{max width=\textwidth}{
    \begin{threeparttable}
    \begin{tabular}{@{}lllll@{}}
      \toprule[\heavyrulewidth]
      \makecell[l]{Base \\Algorithm} & Problem &  Iterates & \makecell[l]{Time-varying \\Hyperparameters}  & \makecell[l]{Time-invariant \\Hyperparameters}\\
      \midrule[\heavyrulewidth]
      \makecell[l]{Gradient \\ descent } & $\begin{array}{@{}ll}\min &f(z,x)\end{array}$ & $\begin{aligned}y^{k+1} &= z^k - \alpha^k \nabla f(z^k,x) \\ z^{k+1} &= y^{k+1} + \beta^k (y^{k+1} - y^k) \end{aligned}$  & $\theta^k = (\alpha^k, \beta^k)$ & $\theta^{\rm inv} = ()$\\
      \midrule
      \makecell[l]{Proximal \\ gradient \\ descent }
      & $\begin{array}{@{}ll}\min & f(z,x)+ g(z,x)\end{array}$ & $\begin{aligned}y^{k+1} &= \prox_{\alpha^k g}(z^k - \alpha^k \nabla f(z^k,x)) \\ z^{k+1} &= y^{k+1} + \beta^k (y^{k+1} - y^k) \end{aligned}$ & $\theta^k = (\alpha^k, \beta^k)$ & $\theta^{\rm inv} = ()$\\
      \midrule
      \makecell[l]{ADMM\\ \citep{dr_splitting, Boyd_admm}}
      & $\begin{array}{@{}ll}\min & f(w,x)+ g(w,x)\end{array}$ & $\begin{aligned}w^{k+1} &= \prox_{\eta f}(z^k) \\ \tilde{w}^{k+1} &= \prox_{\eta g}(2 w^{k+1}- z^k) \\ y^{k+1} &= z^k + \alpha^k (\tilde{w}^{k+1} - w^{k+1}) \\ z^{k+1} &= y^{k+1} + \beta^k (y^{k+1} - y^k)\end{aligned}$ & $\theta^k = (\alpha^k, \beta^k)$ & $\theta^{\rm inv} = (\eta)$\\
      \midrule
      \makecell[l]{OSQP \\ \citep{osqp}}\textcolor{black}{} & $\begin{array}{@{}ll}
      \min & (1/2) w^T Pw + c^T w\\
      \mbox{s.t.}& l \leq Aw \leq u \quad \text{dual\ } (s) \\
      &\\&\\
      \text{with}&x = (P, A, c, l, u)\\
      \end{array}$
      &
      $\begin{aligned}
        & (w^k, \xi^k)= z^k\\
        & v^{k+1} = \Pi_{[l,u]}(\xi^k)\\
        & \text{solve } Q \tilde{w}^{k+1} = \sigma w^k - c + \diag (\rho) (A^T (2 v^{k+1} - \xi^k))\\
        & \tilde{\xi}^{k+1} = \diag (\rho) (A w^{k+1} + \xi^{k} - 2 v^{k+1}) + \xi^k - v^{k+1} \\
        & y^{k+1} = (w^k + \alpha^k (\tilde{w}^{k+1} - w^k), \xi^k + \alpha^k (\tilde{\xi}^{k+1} - \xi^k))\\
        & z^{k+1} = z^k + \beta^k (y^{k+1} - y^k)
        \\ \\
        & \text{with}\quad Q = P + \sigma I + \diag (\rho)  A^T A\\
        &\rho = (\rho_{\rm eq} \mathbf{1}_{m_{\rm eq}}, \rho_{\rm ineq} \mathbf{1}_{m_{\rm ineq}})\\
      \end{aligned}$ & $\theta^k = (\alpha^k, \beta^k)$ & $\theta^{\rm inv} = (\sigma, \rho_{\rm eq}, \rho_{\rm ineq})$\\
      \midrule
      \makecell[l]{SCS \\ \citep{scs_quadratic}}\textcolor{black}{} & $\begin{array}{@{}ll}
    \min & (1/2) w^T Pw + c^T w\\
    \mbox{s.t.}& Aw + s = b \quad \text{dual\ } (v) \\
    & s \in \mathcal{K} \\
    &\\
      \text{with}&x = (P, A, c, b)
    \end{array}$ 
    & $\begin{aligned}
      & \text{solve } (R + M) \tilde{u}^{k+1} = R (z^k - q) \\
      & u^{k+1} = \Pi_{ \reals^p \times \mathcal{K^*}}(2 \tilde{u}^{k+1} - z^k)\\
      & y^{k+1} = z^k + \alpha^k (u^{k+1} - \tilde{u}^{k+1})\\
      & z^{k+1} = y^{k+1} + \beta^k (y^{k+1} - y^k)\\ \\
      & \text{with}\quad 
      R = \diag(r_w \mathbf{1}_q, r_{y_{\rm z}} \mathbf{1}_{m_{\rm z}}, r_{y_{\rm nz}}  \mathbf{1}_{m_{\rm nz}}) \\
      &     M = \begin{bmatrix}
        I_p + P & A^T  \\
        -A & I_m \\
      \end{bmatrix}, q = (c,b)\\
    \end{aligned}$ & $\theta^k=(\alpha^k, \beta^k)$ & $\theta^{\rm inv} = (r_w, r_{y_{\rm z}}, r_{y_{\rm nz}})$\\
    \bottomrule[\heavyrulewidth]
    \end{tabular}
    \end{threeparttable}
    }
  \end{table}

\myparagraph{Connection to Nesterov acceleration}
In Table~\ref{table:fp_algorithms}, the accelerated gradient descent and accelerated proximal gradient descent algorithms are generalizations of Nesterov's acceleration respectively for smooth convex optimization and composite optimization.
By setting $\alpha^k$ and $\beta^k$ appropriately, we can recover Nesterov's method. 
  
\myparagraph{Connection to Anderson acceleration}
Since Nesterov's acceleration method is limited to (proximal) gradient descent, developing acceleration schemes for more general fixed-point problems remains an open research direction.
One popular heuristic that can work well in practice is Anderson acceleration~\citep{anderson1965iterative}, which combines multiple past iterates.
To be specific, Anderson acceleration computes the next iterate as a weighted combination of the last $H$ fixed-point updates, where the weights are chosen to sum to one.
The weights are typically computed by solving an equality-constrained least-squares problem at each iteration~\citep[Problem~1.1]{anderson_acceleration}. 
Our approach can be seen as a learned variant of Anderson acceleration, where we fix $H=2$ and replace the per-iteration optimization of weights with a learned schedule that is shared across all problem instances.
Leveraging the parametric setting allows us to (i) learn all hyperparameters jointly, instead of tuning weights one iteration at a time, and (ii) certify worst-case performance via PEP as we will show in \Sec~\ref{sec:robustness}.

\myparagraph{The purpose of time-invariant hyperparameters in ADMM}
In our framework, we enforce certain hyperparameters to be time-invariant for ADMM-based algorithms. The main reason is to formulate these algorithms as accelerated fixed-point iteration with respect to the \emph{same} nonexpansive operator throughout each step. We formally present this in the following proposition.

\begin{proposition}\label{prop:admm_nonexp}
  Let $x$ be the problem parameter and let $\{y^k(x),z^k(x)\}_{k=0,1\dots}$ denote the iterates of an ADMM-based algorithm from Table~\ref{table:fp_algorithms} with weights $\theta$.
  Let the time-invariant parameters $\theta^{\rm inv}$ be positive.
  Then there exist a matrix $R(\theta^{\rm inv}) \in \symm_{++}^n$ and an operator $S_{\theta^{\rm inv}} :\reals^n \times \reals^d \rightarrow \reals^n$ that is nonexpansive in its first argument with respect to $R(\theta^{\rm inv})$ such that
  \begin{align*}
  y^{k+1}(x) &= (\alpha^k / 2) S_{\theta^{\rm inv}}(z^k(x),x) + (1 - (\alpha^k / 2)) z^k(x), \\
  z^{k+1}(x) &= y^{k+1}(x) + \beta^k (y^{k+1}(x) - y^k(x)).
\end{align*}
\end{proposition}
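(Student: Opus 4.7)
The plan is to recognize each ADMM-based algorithm in Table~\ref{table:fp_algorithms} as a \emph{relaxed Douglas--Rachford (DR) iteration} followed by the momentum step. Once we identify such an iteration, the nonexpansive operator $S_{\theta^{\rm inv}}$ will simply be the DR \emph{reflection}-composition operator, since DR splitting can always be written as $T_{\rm DR} = \tfrac{1}{2}(I + S)$ with $S$ nonexpansive in an appropriate metric $R$. The desired identity will then follow from the algebraic manipulation
\[
z + \alpha\bigl(T_{\rm DR}(z) - z\bigr) = \tfrac{\alpha}{2} S(z) + \bigl(1 - \tfrac{\alpha}{2}\bigr) z,
\]
which matches the first line of the proposition after setting $y^{k+1}$ to the left-hand side. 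The positivity assumption on $\theta^{\rm inv}$ will be used precisely to guarantee $R \succ 0$, so that the $R$-norm is a valid norm and the prox/resolvent reformulations below are well-defined.

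\textbf{Case of ADMM.} First I would treat the plain ADMM iteration. Setting $w^{k+1} = \prox_{\eta f}(z^k,x)$ and $\tilde w^{k+1} = \prox_{\eta g}(2w^{k+1} - z^k, x)$, the classical identity $T_{\rm DR}(z,x) = z + \tilde w - w$ gives $y^{k+1} = z^k + \alpha^k(T_{\rm DR}(z^k,x) - z^k)$. Taking $S_{\theta^{\rm inv}}(z,x) = (2\prox_{\eta g}(\cdot, x) - I)\circ(2\prox_{\eta f}(\cdot, x) - I)(z)$, which is nonexpansive in the standard Euclidean norm (so $R = I$) as a composition of two reflection operators across prox maps of proper closed convex functions, one obtains $T_{\rm DR} = \tfrac{1}{2}(I + S_{\theta^{\rm inv}})$ and the required relation follows immediately. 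The momentum step is already in the stated form.

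\textbf{Cases of OSQP and SCS.} For OSQP and SCS, the plan is to use the known reformulations (e.g., from the original OSQP and SCS papers) that express each algorithm as DR splitting on an appropriately scaled space. For OSQP, I would take $R = \mathrm{blkdiag}(\sigma I,\, \diag(\rho))$ with $\sigma, \rho_{\rm eq}, \rho_{\rm ineq} > 0$ so $R \succ 0$, and show that the linear solve $Q\tilde w^{k+1} = \sigma w^k - c + \diag(\rho) A^T(2v^{k+1} - \xi^k)$ together with the projection $v^{k+1} = \Pi_{[l,u]}(\xi^k)$ is exactly the $R$-weighted resolvent composition corresponding to DR splitting on the consensus reformulation used by OSQP. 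For SCS, the analogous argument uses $R = \diag(r_w \mathbf{1}_q, r_{y_{\rm z}} \mathbf{1}_{m_{\rm z}}, r_{y_{\rm nz}} \mathbf{1}_{m_{\rm nz}})$, and the identity $(R+M)^{-1}R$ acts as the $R$-weighted resolvent of the maximal monotone operator associated with the skew-symmetric part of $M$, which together with the projection onto $\reals^q\times\mathcal{K}^*$ forms a DR-type splitting. In both cases, the resulting $S_{\theta^{\rm inv}}$ is the reflection-composition analog and is nonexpansive in the $R$-norm by the standard fact that reflections of resolvents of maximal monotone operators are $R$-nonexpansive.

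\textbf{Main obstacle.} The bookkeeping for ADMM is essentially immediate, so the difficulty lies entirely in the OSQP and SCS cases: verifying that the linear-system solve, when combined with the projection, truly coincides with a resolvent step of a maximal monotone operator in the weighted inner product induced by $R$, rather than only coinciding up to a change of variables. I would handle this by writing the KKT-style operator $M$ (for SCS) or the equivalent augmented-Lagrangian operator (for OSQP) as $A_{\theta^{\rm inv}} + B_{\theta^{\rm inv}}$ with $A_{\theta^{\rm inv}}$ and $B_{\theta^{\rm inv}}$ maximal monotone in the $R$-metric, and then invoke the standard DR framework in a Hilbert space endowed with the inner product $\langle \cdot, \cdot\rangle_R$. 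The positivity of each component of $\theta^{\rm inv}$ is what makes this inner product well-defined and therefore is exactly what the hypothesis of the proposition requires.
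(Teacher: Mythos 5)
Your proposal is correct and follows essentially the same route as the paper: the paper's proof also rests on the single lemma that the reflected-resolvent composition $(2J_{R^{-1}A}-I)\circ(2J_{R^{-1}B}-I)$ of two maximal monotone operators is nonexpansive in the $R$-norm, writes the relaxed step as $z+\alpha(T_{\rm DR}(z)-z)=(\alpha/2)S(z)+(1-\alpha/2)z$, and instantiates $R=I$ for ADMM, $R=\diag(\sigma\ones,\rho)$ for OSQP (viewed as ADMM on the consensus reformulation), and $R=\diag(r_w\mathbf{1}_q,r_{y_{\rm z}}\mathbf{1}_{m_{\rm z}},r_{y_{\rm nz}}\mathbf{1}_{m_{\rm nz}})$ for SCS (viewed as DR on $0\in Mz+q+\mathcal{N}_{\mathcal{C}}(z)$). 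One small correction: for SCS the monotonicity of the affine part comes from $M+M^{T}\succeq 0$ for the full matrix $M$ (which has the non-skew block $I_q+P$), not merely from its skew-symmetric part.
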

See Appendix~\Subsec~\ref{proof:admm_nonexp} for the proof.
Notably, this comes with two significant benefits. 
First, as we will elaborate in \Sec~\ref{sec:robustness}, the fact that $S_{\theta^{\rm inv}}$ is the same across iterations makes the method well-suited for a worst-case analysis with PEP. 

Another benefit lies on computational tractability.
For solving convex conic programs,
running OSQP or SCS without any learned hyperparameters requires factoring a matrix once and then solving a linear system in each iteration with back-substitution by reusing the same factorization but with a different right-hand-side vector~\citep{osqp,scs_quadratic}. 
Factoring a dense matrix (\eg, with an LU-factorization) has complexity $\mathcal{O}(n^3)$, while the back-substitution step has complexity $\mathcal{O}(n^2)$~\citep[\Sec~11]{vmls}.
In particular, if the hyperparameters that enter the matrix in the linear system change across iterations, then at each iteration we must factor a new matrix.
On the other hand, fixing the hyperparameters that enter the system matrix across all iterations enables reuse of a single matrix factorization, thereby greatly reducing the computational burden.

\subsection{Training acceleration hyperparameters}\label{subsec:training}
In this subsection, we explain how to train the weights $\theta$ to improve performance over the parametric family of problems for a budget of $K$ iterations. 
We first define the loss functions that we use and then formulate the training problem.

\myparagraph{Loss functions}
The loss function $\ell(z^K_\theta(x), x)$ (with respect to the $K$-th iterate $z^K_\theta(x)$ for parameter $x$)
depends on the structure of the underlying optimization problem.
In particular, we distinguish between two different cases:
\begin{itemize}[left=5pt]
  \item {\bf Unconstrained problems}. We use the suboptimality $f(z^K_\theta(x),x) + g(z^K_\theta(x), x) - f(z^\star(x), x) - g(z^\star(x), x)$, a standard measure of performance.
  \item {\bf Constrained problems}. 
  We use the sum of the primal and dual residuals, which upper-bounds the standard max-residual metric used in constrained convex solvers~\citep{osqp,scs_quadratic}.
\end{itemize}
Technically speaking, our method requires optimal solutions for the training instances in the case of unconstrained optimization.
While this requirement might seem limiting, for most of the problems of our interest the main computational bottleneck is to train the learned optimizer rather than to solve all of the training instances.
In particular, for all of our examples we only need to solve ten training instances.

\myparagraph{The learning acceleration hyperparameters training problem}
We assume access to a training dataset of parameters and corresponding optimal solutions $\{(x_i, z^\star(x_i))\}_{i=1}^N$.
We formulate our problem of learning acceleration hyperparameters as
\begin{equation}\label{prob:simplified_l2o}
  \begin{array}{ll}
\mbox{minimize} & (1 / N) \sum_{i=1}^N \ell(z^K_\theta(x_i),x_i)\\
  \mbox{subject to} &y_\theta^{k+1}(x_i) = T_{\phi^k}(z^k_\theta(x_i)) ,\hspace{2mm} k\leq  K-1, \hspace{2mm}  i \leq N-1\\
      &z_\theta^{k+1}(x_i) = y_\theta^{k+1}(x_i) + \beta^k (y_\theta^{k+1}(x_i) - y_\theta^k(x_i)),  \hspace{1.9mm} k \leq K-1, \hspace{1.9mm} i \leq N-1\\\
  &z^0_\theta(x_i) = 0,  y^0_\theta(x_i) = 0, \hspace{2mm}  i \leq N-1,
\end{array}
\end{equation}
where $\theta = (\alpha^0, \beta^0, 
\dots, \alpha^{K-1}, \beta^{K-1}, \theta^{\rm inv}) \in \reals^{2K + n^{\rm inv}}$ is the decision variable.

\myparagraph{Using gradient-based methods to train}
In order to train, we unroll~\citep{algo_unrolling} the algorithm steps---\ie, we differentiate through them.
To compute such gradients, we rely on autodifferentiation techniques~\citep{autodiff}.\footnote{The training problem~\eqref{prob:simplified_l2o} is in general not differentiable at every point; indeed, in many cases the fixed-point operator involves a non-differentiable step (\eg, the projection step onto the nonnegative orthant in projected gradient descent).
At such points, the autodifferentiation techniques use subgradients to estimate directional derivatives~\citep{autodiff}.}
In our framework, it is necessary that some of the weights are positive (see Proposition~\ref{prop:admm_nonexp}). 
For this, we can reparametrize as $\theta_i^k = \exp(\nu_i^k)$ and optimize for $\nu_i^k \in \reals$.

\section{Worst-case certification and PEP-regularized training}\label{sec:robustness}
The learning to optimize problem is designed to minimize the loss over the distribution of problem parameters $\mathcal{D}$, but does not provide the worst-case guarantees we seek.
In this section, we first show in \Subsec~\ref{subsec:eval_robustness} how the choice of hyperparameters can be used to provide worst-case guarantees for all possible parameters $x$ within a set $\mathcal{X}$.
Our method is grounded in the PEP framework~\citep{pep,pep2}, an SDP-based technique that provides numerical worst-case guarantees within a given number of iterations.
In \Subsec~\ref{subsec:train_robustness} we augment the learning to optimize training problem~\eqref{prob:simplified_l2o} to train our hyperparameters so that our learned optimizer can both perform well over the distribution of parameters while also obtaining worst-case guarantees for any parameter $x \in \mathcal{X}$.
This culminates with the formulation of the \emph{PEP-regularized training problem}~\eqref{prob:pep_regularized}.
We show how to differentiate through the solution of this SDP with respect to the hyperparameters in order to use gradient-based training.
Finally, in \Subsec~\ref{subsec:pros_cons}, we discuss advantages and limitations of using PEP in our framework.

\subsection{Semidefinite programming for worst-case certification}\label{subsec:eval_robustness}

In this subsection, we show how to obtain worst-case guarantees for parameters $x \in \mathcal{X}$ provided the learned hyperparameters $\theta$.
These guarantees take the form 
\begin{equation}\label{eq:worst_case}
  r(z^K_\theta(x), x) \leq \gamma(\theta) \|z^0(x) - z^\star(x)\|^2 \quad \forall x \in \mathcal{X},
\end{equation}
where $r : \reals^n \times \reals^d \rightarrow \reals_+$ is a performance metric (\eg, distance to optimality) and $z^\star(x)$ is any minimizer.

\subsubsection{Worst-case certification of accelerated proximal gradient descent}

We now present our method for obtaining worst-case guarantees for accelerated (proximal) gradient descent.
Our analysis is based on PEP, which provides numerical worst-case guarantees for general \emph{function classes} such as $\mathcal{F}_{\mu, L}$---\ie, $\mu$-strongly convex and $L$-smooth functions. We formalize this for our setting as follows.

\begin{definition}[$\mathcal{G}$-parameterized pairs]
  Let $f : \reals^n \times \reals^d \rightarrow \reals \cup \{+\infty\}$ be a function, $\mathcal{X} \subseteq \reals^d$ be a set and $\mathcal{G}$ be a function class. We call that $(f,\mathcal{X})$ is $\mathcal{G}$-parameterized if $f(\cdot,x) \in \mathcal{G} \quad \forall x \in \mathcal{X}$. 
\end{definition}

As an illustration, consider the following example.

\begin{example}\label{ex:least_squares}
  Let $f(z,x) = (1/2)\|Az - x\|_2^2$ and $\mathcal{X} = \reals^d$.
  The pair $(f,\mathcal{X})$ is $\mathcal{Q}_{\mu,L}$-parameterized, where $\mu$ and $L$ are the smallest and largest eigenvalues of $A^T A$. 
\end{example}

For $\mathcal{G}$-parameterized pairs, the worst-case guarantees for the entire function class $\mathcal{G}$ immediately imply worst-case guarantees for all parameters $x \in \mathcal{X}$. 
Such structural property allows us to bypass analyzing the parametric dependence directly, and instead certify worst-case guarantees over the entire function class (which includes all possible problem instances in the set).
To be specific, we consider the following function classes and performance metrics.

\begin{assumption}\label{assumption:proxgd} $\mathcal{G}$ is a function class and $r$ is a performance metric, where: %
\begin{itemize}[leftmargin=5em]
    \item[Case 1:] $\mathcal{G} = \mathcal{F}_{0, L}$,  $r(z, x) = f(z, x) + g(z, x) - f(z^\star(x), x) - g(z^\star(x), x)$,
    \item[Case 2:] $\mathcal{G} = \mathcal{F}_{\mu, L}$, $r(z, x) = \|z - z^\star(x)\|^2$,
    \item[Case 3:] $\mathcal{G} = \mathcal{Q}_{\mu, L}$, $r(z, x) = \|z - z^\star(x)\|^2$.
\end{itemize}
\end{assumption}

\myparagraph{SDP for accelerated proximal gradient descent} Let $(f, \mathcal{X})$ be $\mathcal{G}$-parameterized. Then an auxiliary optimization problem for \eqref{eq:worst_case} can be formulated as follows:
\begin{equation}\label{prob:pep_pgd}
  \begin{array}{lll}
\mbox{maximize} & \mbox{(performance metric)} & r(z^K)\\
  \mbox{subject to}  & \mbox{(initial point)} & z^0 = y^0, \|z^0-z^\star\|_2^2 \leq 1 \\
  & \mbox{(optimality)} & \nabla f(z^\star) + \partial g (z^\star) = 0 \\

  & \mbox{(algorithm update)} & y^{k+1} = \prox_{\alpha^k g}(z^k - \alpha^k \nabla f(z^k)), \hspace{2mm} k \leq K-1 \\
      & & z^{k+1} = y^{k+1} + \beta^k (y^{k+1} - y^k), \hspace{2mm} k \leq K-1\\
  & \mbox{(function class)} & f \in \mathcal{G}, g \in \mathcal{F}_{0,\infty}.
\end{array}
\end{equation}
Letting $\bar{\gamma}(\theta)$ be the optimal value of \eqref{prob:pep_pgd},\footnote{Here, we assume that $\theta$ is given, and the decision variables to the maximization problem are the functions $f$ and $g$, and the iterates $z^0,\dots,z^K,z^\star,y^0,\dots,y^K$. From $(f, \mathcal{X})$ being $\mathcal{G}$-parametrized, we remove the notational dependency on the parameter $x$ from the functions $f$, $g$, and $r$ for simplicity.} we have $r(z^K) \leq \bar{\gamma}(\theta)\|z^0 - z^\star\|^2$ (by rescaling, under Assumption \ref{assumption:proxgd}), which is the worst-case guarantee that we aim for.

However, \eqref{prob:pep_pgd} cannot be directly solved because the function class constraints are infinite-dimensional. 
The key idea of PEP is to replace that condition on the function classes with a finite number of valid inequalities, for all pairs of the iterates evaluated throughout the algorithm update \citep{pep, taylor2017exactworst, pep2}. 
This yields an SDP formulation, where a positive semidefinite matrix encodes the Gram matrix of the algorithm iterates. 
For simplicity, here we consider the case $\mathcal{G} = \mathcal{F}_{\mu, L}$; see Appendix Subsection \ref{appendix:sdp_pgd} for the similar case when $\mathcal{G} = \mathcal{Q}_{\mu, L}$.
The SDP is formulated as
\begin{equation}\label{prob:sdp_pgd}
  \begin{array}{lll}
\mbox{maximize} & \mbox{(performance metric)} & \Tr(G U) + \sum_i (v^i f^i + w^ig^i) \\
  \mbox{subject to} & \mbox{(initial point)} & \Tr(G A^0) \leq 1 \\
  & \mbox{(optimality)} & \Tr(GA^\star) = 0 \\
  & \mbox{(algorithm update} & f^j \geq f^i + \Tr(GB^{ij}) \quad \forall i, j \\
  & \mbox{+ function class)} & g^j \geq g^i + \Tr(G C^{ij}) \quad \forall i, j \\
  & \mbox{(Gram matrix)} & G \succeq 0,
  \end{array}
\end{equation}
for certain choices of the coefficients (see Appendix Subsection \ref{appendix:sdp_pgd} for details).
Here, the decision variables are $f^0, \dots, f^K, f^\star, g^0, \dots, g^K, g^\star \in \reals$ (corresponding to functional values of $f$ and $g$) and $G$ (Gram matrix of the iterates). 
For the function classes of our interest, several works have characterized interpolation inequalities that are tight \citep{taylor2017exactworst, pep2, bousselmi2024interpolation}, which we use here as the valid inequalities; in other words, the SDP relaxation \eqref{prob:sdp_pgd} attains the same optimal value as the original problem \eqref{prob:pep_pgd}. In particular, the optimal value $\gamma(\theta)$ of the SDP \eqref{prob:sdp_pgd} satisfies the following.

\begin{theorem}
  Suppose that Assumption \ref{assumption:proxgd} holds for function class $\mathcal{G}$ and performance metric $r$. 
  Also, assume that $(f, \mathcal{X})$ is $\mathcal{G}$-parametrized, $(g, \mathcal{X})$ is $\mathcal{F}_{0, \infty}$-parametrized, and an optimal solution $z^\star(x)$ exists for all $x \in \mathcal{X}$.
  Then for the optimal value $\gamma(\theta)$ of \eqref{prob:sdp_pgd},
  \begin{equation*}
    r(z^K_\theta(x),x) \leq \gamma(\theta) \|z^0(x) - z^\star(x)\|_2^2 \quad \forall x \in \mathcal{X}.
  \end{equation*}
\end{theorem}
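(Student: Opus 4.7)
The strategy is to fix an arbitrary $x \in \mathcal{X}$ and exhibit the actual algorithm trajectory at $x$ as a feasible point of the PEP~\eqref{prob:pep_pgd} whose objective equals $r(z^K_\theta(x),x)/\|z^0(x)-z^\star(x)\|_2^2$. Combining this with the fact that the SDP~\eqref{prob:sdp_pgd} is a \emph{tight} relaxation of~\eqref{prob:pep_pgd}---so its optimal value $\gamma(\theta)$ upper bounds every feasible PEP objective---then yields the claimed inequality.

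First, I would dispose of the degenerate case $z^0(x) = z^\star(x)$, in which the iterates stay at $z^\star(x)$ and $r(z^K_\theta(x), x) = 0$, so the bound holds trivially. Otherwise, set $D := \|z^0(x) - z^\star(x)\|_2^2 > 0$ and introduce the rescaled functions $\tilde f(\tilde z) := f(z^\star(x) + \sqrt{D}\,\tilde z, x)/D$ and $\tilde g(\tilde z) := g(z^\star(x) + \sqrt{D}\,\tilde z, x)/D$, together with rescaled iterates $\tilde z^k := (z^k_\theta(x) - z^\star(x))/\sqrt{D}$ and $\tilde y^k := (y^k_\theta(x) - z^\star(x))/\sqrt{D}$, and $\tilde z^\star := 0$. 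A routine chain-rule computation gives $\nabla^2 \tilde f(\tilde z) = \nabla^2 f(\cdot, x)$ evaluated at the corresponding point, so the hypotheses force $\tilde f \in \mathcal{G}$ and $\tilde g \in \mathcal{F}_{0,\infty}$ with the same class parameters. The same computation shows that the (proximal) gradient and momentum steps are covariant under this affine change of variables, so running the accelerated scheme on $(\tilde f, \tilde g)$ with the same hyperparameters $\theta$ and initial point $\tilde z^0$ produces exactly $\{\tilde z^k, \tilde y^k\}$. These quantities satisfy every constraint of~\eqref{prob:pep_pgd} (including the normalization $\|\tilde z^0 - \tilde z^\star\|_2^2 = 1$), and in each of the three cases of Assumption~\ref{assumption:proxgd} the performance metric scales cleanly to $r(\tilde z^K, 0) = r(z^K_\theta(x), x)/D$.

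The final step is to invoke tightness of the SDP. The interpolation theorems cited in the excerpt---\citet{taylor2017exactworst, pep2} for $\mathcal{F}_{\mu, L}$, \citet{bousselmi2024interpolation} for $\mathcal{Q}_{\mu, L}$, and the standard subgradient interpolation for $\mathcal{F}_{0,\infty}$ applied to $g$---assert that the coefficient inequalities appearing in~\eqref{prob:sdp_pgd} (and its quadratic analogue deferred to the appendix) are necessary and sufficient for finite samples $(z^i, f^i, \nabla f(z^i))$ and $(z^i, g^i, v^i)$ to be interpolable by functions in the respective classes. Consequently~\eqref{prob:pep_pgd} and~\eqref{prob:sdp_pgd} share optimal value $\gamma(\theta)$, and the rescaled trajectory above---being feasible with objective $r(z^K_\theta(x),x)/D$---gives $r(z^K_\theta(x),x)/D \leq \gamma(\theta)$, which rearranges to the desired bound.

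The main obstacle I anticipate is the bookkeeping in the rescaling step: verifying carefully that the class parameters $(\mu, L)$ are preserved under the affine change of variables, that the (proximal) gradient and momentum updates are covariant without altering the schedule $\theta$, and that each of the three performance metrics in Assumption~\ref{assumption:proxgd} scales by exactly $D$. Once this covariance is in place, the cited interpolation theorems supply the rest of the argument.
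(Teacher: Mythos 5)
Your proposal is correct and follows essentially the same route as the paper: exhibit the (rescaled) trajectory at each $x \in \mathcal{X}$ as feasible for the function-class PEP~\eqref{prob:pep_pgd}, note that the interpolation inequalities make~\eqref{prob:sdp_pgd} a relaxation whose value $\gamma(\theta)$ upper-bounds the PEP objective, and transfer the function-class bound to the parameter set via $\mathcal{G}$-parametrization; you simply make explicit the rescaling/covariance bookkeeping that the paper compresses into ``by rescaling, under Assumption~\ref{assumption:proxgd}.'' One minor remark: you invoke \emph{tightness} of the SDP relaxation, but only the one-sided (necessity) direction of the interpolation inequalities is needed for the stated upper bound, which is all the paper uses.
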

\begin{proof}
    This follows because the constraints in \eqref{prob:pep_pgd} imply those in \eqref{prob:sdp_pgd} from interpolation inequalities \citep{taylor2017exactworst, pep2,  bousselmi2024interpolation}; for details, see Appendix Subsection \ref{appendix:sdp_pgd}.
    The worst-case bounds on the function class then imply worst-case bounds over $\mathcal{X}$ since $(f,\mathcal{X})$ is $\mathcal{G}$-parametrized and $(g,\mathcal{X})$ is $\mathcal{F}_{0,\infty}$-parametrized.
\end{proof}
Note that this result also applies to accelerated gradient descent (\ie, when $g \equiv 0$).

\subsubsection{Worst-case certification of accelerated ADMM}
The PEP formulation for ADMM is fairly similar to that for accelerated proximal gradient descent.
Our approach starts with identifying ADMM as a fixed-point iteration; recall from Proposition~\ref{prop:admm_nonexp}, that the iterations of accelerated ADMM (and ADMM-based solvers) from Table~\ref{table:fp_algorithms} can be represented in terms of fixed-point iterations that are nonexpansive with respect to a positive definite matrix.

\begin{definition}[$R$-nonexpansive pairs]
  Let $S : \reals^{n} \times \reals^d \rightarrow \reals^n$ be an operator, $R \in \symm^n_{++}$, and $\mathcal{X} \subseteq \reals^d$ be a set of parameters.
  We call that $(S, \mathcal{X})$ is $R$-nonexpansive if for all $x \in \mathcal{X}$, the operator $S(\cdot,x)$ is nonexpansive with respect to $R$.
\end{definition}

\myparagraph{SDP for accelerated ADMM}

We obtain the worst-case guarantee in terms of the fixed-point residual as this is a standard metric for ADMM~\citep[\Sec~7.3]{mon_primer}. 
Provided the hyperparameters $\theta$ where $\theta^{\rm inv}$ is positive, Proposition~\ref{prop:admm_nonexp} guarantees the existence of a matrix $R(\theta^{\rm inv}) \in \symm^n_{++}$ such that the algorithm can be written as fixed-point iterations that are nonexpansive with respect to $R(\theta^{\rm inv})$, combined with averaging steps (with $\alpha$ values) and momentum steps (with $\beta$ values).

We now turn to writing the certification problem as we did in the previous subsection.
Note that we suppress the dependence on $x$ and $\theta$ for simplicity in the formulation, and that the underlying norm is $\| \cdot \|_{R(\theta^{\rm inv})}$; in particular, the performance metric is $r(z, x) = \|z - S(z,x)\|_{R(\theta^{\rm inv})}^2$ and the guarantee is with respect to $\|z^0(x) - z^\star(x)\|_{R(\theta^{\rm inv})}^2$.
We formulate the optimization problem as
\begin{equation}\label{prob:pep_admm}
  \begin{array}{lll}
\mbox{maximize} & \mbox{(performance metric)} & \|z^K - S(z^K) \|^2 \\
  \mbox{subject to} & \mbox{(initial point)} &z^0 = y^0, \|z^0-z^\star\|^2 \leq 1 \\
  & \mbox{(optimality)} &z^\star = S(z^\star) \\
  & \mbox{(algorithm update)} & y^{k+1} = (1 - \alpha^k) z^k + \alpha^k S(z^k), \quad k \leq K-1 \\
      & &z^{k+1} = y^{k+1} + \beta^k (y^{k+1} - y^k), \quad k \leq K-1 \\

  & \mbox{(operator class)} & S \in \mathcal{T}_1,
\end{array}
\end{equation}
where the decision variables are the iterates $z^0,\dots,z^K,z^\star,y^0,\dots,y^K$ and the operator $S$.
An SDP relaxation of \eqref{prob:pep_admm} can be written as:
\begin{equation}\label{prob:sdp_admm}
  \begin{array}{lll}
\mbox{maximize} & \mbox{(performance metric)} & \Tr(G U) \\
  \mbox{subject to} & \mbox{(initial point)} & \Tr(G A^0) \leq 1 \\ 
  & \mbox{(optimality)} & \Tr(GA^\star) = 0 \\
  & \mbox{(algorithm update + operator class)} & \Tr(G B^{ij}) \leq 0 \quad \forall i, j \\
  & \mbox{(Gram matrix)} & G \succeq 0,
  \end{array}
\end{equation}
for certain choices of $U, A^0, A^\star, B^{ij}$ (that are different from previous section), where the decision variable is the Gram matrix $G$. 

\begin{theorem}
Let $\theta$ be the hyperparameters for an ADMM-based algorithm such that $\theta^{\rm inv} > 0$.
Let $S_{\theta^{\rm inv}}$ and $R(\theta^{\rm inv})$ be the nonexpansive operator and positive definite matrix from Proposition~\ref{prop:admm_nonexp} respectively. 
Assume that for all $x \in \mathcal{X}$, there exists a fixed-point $z^\star(x) = S_{\theta^{\rm inv}}(z^\star(x),x)$.
Then for the optimal value $\gamma(\theta)$ of \eqref{prob:sdp_admm},
  \begin{equation*}
    \|z^K_\theta(x) - S_{\theta^{\rm inv}}(z^K_\theta(x),x)\|_{R(\theta^{\rm inv})}^2 \leq \gamma(\theta) \|z^0(x) - z^\star(x)\|_{R(\theta^{\rm inv})}^2 \quad \forall x \in \mathcal{X}.
  \end{equation*}
\end{theorem}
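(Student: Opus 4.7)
The plan is to mirror the strategy used for accelerated proximal gradient descent, but instantiated for the $R(\theta^{\rm inv})$-nonexpansive operator representation of accelerated ADMM. First, I would invoke Proposition~\ref{prop:admm_nonexp} to rewrite the iterates of the ADMM-based algorithm as the averaged-plus-momentum update with respect to a fixed operator $S_{\theta^{\rm inv}}$ that is nonexpansive with respect to $R(\theta^{\rm inv})$. This guarantees that for every $x \in \mathcal{X}$, the tuple $(z^0_\theta(x),\ldots,z^K_\theta(x), y^0_\theta(x),\ldots,y^K_\theta(x), z^\star(x), S_{\theta^{\rm inv}}(\cdot, x))$ is feasible for the semi-infinite problem \eqref{prob:pep_admm}, after normalizing by $\|z^0(x)-z^\star(x)\|_{R(\theta^{\rm inv})}^2$. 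Hence the optimal value of \eqref{prob:pep_admm} is an upper bound on the normalized fixed-point residual uniformly over $x \in \mathcal{X}$.

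Next, I would show that \eqref{prob:sdp_admm} is a valid (in fact tight) SDP relaxation of \eqref{prob:pep_admm}. The key observation is that every constraint and the objective in \eqref{prob:pep_admm}, after substituting the algorithm updates, are quadratic in the iterates and in $S(z^k)$, $S(z^\star)$ under the weighted inner product $\langle \cdot,\cdot\rangle_{R(\theta^{\rm inv})}$. Collecting these vectors into a matrix $P$ and letting $G = P^\top R(\theta^{\rm inv}) P$, each quadratic expression becomes a linear function $\Tr(G\,\cdot)$ of the Gram matrix, and $G \succeq 0$ by construction. The initial-point, optimality, and objective data in \eqref{prob:sdp_admm} are obtained by reading off the matrices $U$, $A^0$, $A^\star$ from these expressions.

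The main obstacle is replacing the operator-class constraint $S \in \mathcal{T}_1$ by a finite family of inequalities of the form $\Tr(G B^{ij}) \leq 0$. For this I would invoke the interpolation theorem for nonexpansive operators in the sense of Ryu et al.\ \citep{ryu_ospep}: a finite collection of input/output pairs $\{(u_i, S_i)\}$ is consistent with some $1$-Lipschitz operator (with respect to $\|\cdot\|_{R(\theta^{\rm inv})}$) if and only if $\|S_i - S_j\|_{R(\theta^{\rm inv})}^2 \leq \|u_i - u_j\|_{R(\theta^{\rm inv})}^2$ for all pairs $i, j$. Applied to the pairs $(z^k_\theta, S_{\theta^{\rm inv}}(z^k_\theta))$ and $(z^\star, z^\star)$, each such inequality is quadratic in the iterates under $\langle \cdot,\cdot\rangle_{R(\theta^{\rm inv})}$ and therefore encodes as $\Tr(G B^{ij}) \leq 0$ for an appropriate $B^{ij}$. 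This gives one direction (feasibility of the PEP iterates implies feasibility for \eqref{prob:sdp_admm}), which is all that is needed for the upper bound.

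Combining these pieces: for any $x \in \mathcal{X}$, scaling the iterates so that $\|z^0(x) - z^\star(x)\|_{R(\theta^{\rm inv})} = 1$ produces a feasible point of \eqref{prob:sdp_admm} whose objective value equals the normalized squared fixed-point residual at step $K$. Therefore this residual is upper bounded by the optimal value $\gamma(\theta)$ of \eqref{prob:sdp_admm}, and rescaling yields the claimed inequality uniformly in $x \in \mathcal{X}$.
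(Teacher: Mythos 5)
Your proposal is correct and follows essentially the same route as the paper: invoke Proposition~\ref{prop:admm_nonexp} to obtain the fixed $R(\theta^{\rm inv})$-nonexpansive operator $S_{\theta^{\rm inv}}$, observe that the (normalized) iterates are feasible for \eqref{prob:pep_admm}, and pass to \eqref{prob:sdp_admm} via the Gram matrix $G$ (your $P^\top R P$ is the paper's $(R^{1/2}P)^\top(R^{1/2}P)$) together with the nonexpansive interpolation inequalities of \citet{ryu_ospep}, noting that only the relaxation direction is needed. No gaps.
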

\begin{proof}
    After using Proposition~\ref{prop:admm_nonexp}, the derivation is fairly similar to that for accelerated proximal gradient descent.
    The constraints in~\eqref{prob:pep_admm} imply those in~\eqref{prob:sdp_admm} from interpolation inequalities~\citep{taylor2017exactworst, pep2}.
    For details, see Appendix Subsection \ref{appendix:sdp_admm}.
\end{proof}
We remark that the SDP used to compute $\gamma(\theta)$ does not depend on the matrix $R(\theta^{\rm inv})$.
This flexibility is particularly useful in our case, since we learn the time-invariant hyperparameters $\theta^{\rm inv}$ that determine $R(\theta^{\rm inv})$.
Such dependence of the metric on $R(\theta^{\rm inv})$ connects to a broader line of work on variable-metric methods which adapts the geometry of the algorithm to improve performance~\citep{ giselsson_lin_conv, lscomo}, and more recent approaches that learn problem-specific metrics from data~\citep{metric_learning}.

\subsection{Augmenting the training problem for robustness}\label{subsec:train_robustness}
In this subsection, we show how to adjust the learning to optimize training problem~\eqref{prob:simplified_l2o} to obtain the learning to optimize \emph{PEP-regularized} training problem.
We then show how to compute the derivative of the optimal objective value of the PEP.

\myparagraph{The PEP-regularized training problem}
We design a training problem that aims to achieve the target value $\gamma^{\rm target}$ specified by the user for the robustness.
Ideally, we would constrain the weights $\theta$ so that $\gamma(\theta) \leq \gamma^{\rm target}$.
As this constraint is difficult to enforce, we instead take a penalty-based approach and formulate the PEP-regularized training problem as
\begin{equation}\label{prob:pep_regularized}
  \begin{array}{ll}
\mbox{minimize} & (1 / N) \sum_{i=1}^N \ell(z^K_\theta(x_i),x_i) + \lambda ((\gamma(\theta) - \gamma^{\rm target})_+)^2\\
  \mbox{subject to} &y_\theta^{k+1}(x_i) = T_{\phi^k}(z^k_\theta(x_i)) ,\hspace{2mm} k \leq K-1, \hspace{2mm} i \leq N-1\\
      &z_\theta^{k+1}(x_i) = y_\theta^{k+1}(x_i) + \beta^k (y_\theta^{k+1}(x_i) - y_\theta^k(x_i)),  \hspace{1.9mm} k \leq K-1, \hspace{1.9mm} i \leq N-1\\\
  &z^0_\theta(x_i) = 0,  y^0_\theta(x_i) = 0, \hspace{1.5mm} i \leq N-1,
\end{array}
\end{equation}
with decision variable $\theta$.
Here, $\lambda \in \reals_{++}$ controls the trade-off between the average loss over the training instances and the worst-case value.
The penalty term is designed so that only $\gamma(\theta)$ values above $\gamma^{\rm target}$ are penalized.
We square the positive part of this difference to ensure smoothness.

\myparagraph{Computing the derivative through the PEP-regularized training problem}
The augmented training problem~\eqref{prob:pep_regularized} is related to the original training problem~\eqref{prob:simplified_l2o} that does not consider robustness, but now $\gamma(\theta)$ involves solving an SDP.
To enable the use of gradient-based methods to solve problem~\eqref{prob:pep_regularized}, we rely on techniques from differentiable optimization~\citep{diff_opt,diffcp2019}.
In each iteration of gradient descent used to solve problem~\eqref{prob:pep_regularized}, we only need to solve \emph{a single} SDP as opposed to solving an SDP for each training instance. %
After the training is complete and the weights $\theta$ are fixed, we can compute the exact $\gamma(\theta)$ value by solving one final SDP.
This value is designed to be close to $\gamma^{\rm target}$ due to the penalty formulation.

\subsection{Advantages and limitations of using PEP}\label{subsec:pros_cons}
We highlight several advantages of using the PEP framework to obtain our worst-case guarantees. 
First, it is scalable in that the corresponding SDP is independent of the dimensions $n$ and $d$ \citep{pep}.
Second, it allows $\mathcal{X}$ to be a large set, such as $\reals^d$ (see Example~\ref{ex:least_squares}).
Third, PEP is compatible with gradient-based training via differentiable optimization, since it is based on a convex SDP formulation. 
Finally, the PEP approach allows us to bound the worst-case performance given the \emph{entire sequence} of learned hyperparameters. 
This enables learning hyperparameters that may be individually suboptimal for a particular step, but can enable acceleration when combined with other steps.

Despite these strengths, using PEP in our setting comes with a few limitations.
First, while the SDP is independent of the dimensions $n$ and $d$, its size grows with the number of iterations $K$.
Second, while the PEP approach is known to provide tight guarantees over function classes~\citep{pep2}, these guarantees may be conservative in our case. 
In particular, we always initialize from the zero vector and we only consider parameters within a set $\mathcal{X}$; while PEP applies to these settings, it does not take advantage from such specificities.

\section{Numerical experiments}\label{sec:numerical_experiments}
In this section, we show the strength of our robustly learned acceleration methods via many numerical examples.\footnote{\url{https://github.com/rajivsambharya/learn_accel_steps_robust} contains the code to reproduce our results.}
We apply our method to gradient descent in \Subsec~\ref{subsec:num_gd}, proximal gradient descent in \Subsec~\ref{subsec:num_prox_gd}, OSQP in \Subsec~\ref{subsec:num_osqp}, and SCS in \Subsec~\ref{subsec:num_scs}.
We use JAX~\citep{jax} to train our hyperparameters using the Adam~\citep{adam} optimizer.
To solve the SDP in the PEP-regularized training problem~\eqref{prob:pep_regularized}, we use the first-order method SCS~\citep{scs_quadratic}, and to differentiate through this SDP, we use Cvxpylayers~\citep{cvxpylayers2019}.
In all of our examples, we train on only $10$ instances and evaluate on $1000$ unseen test instances.

\myparagraph{Robustness levels}
In each example, we first apply our approach without training for any worst-case guarantees (\ie, we set $\lambda = 0$ in \eqref{prob:pep_regularized}).
Then, we pick robustness values for $\gamma^{\rm target}$, and train our method, setting the regularization coefficient for the robustness penalty to be $\lambda = 10$.
Since our method is penalty-based, the robustness levels approximately reach the target values in all cases.

\myparagraph{Baseline comparisons}
We refer to our method proposed in this work as {\bf LAH Accel} (short for learned algorithm hyperparameters -- acceleration).
For each example, we provide the final $\gamma(\theta)$ values (which approximately match the chosen $\gamma^{\rm target}$ values).
Within each example, for all methods we train for the same number of iterations $K$.
We are primarily interested in the performance after $K$ steps; we also report, for each example, the number of iterations required to meet several accuracy tolerances, giving a fuller picture of convergence speed.
Our plots report the geometric mean of the performance metric over the test instances.\footnote{For a vector $v \in \reals^m_{++}$ the geometric mean is given by $(\Pi_i v_i)^{1 / m}$.
Using the geometric mean is common when analyzing the performance of optimization algorithms~\citep{osqp,learn_algo_steps}.
}
After $K$ steps, we run the vanilla method (see below) for LAH Accel. %
We compare against the following methods:
\begin{itemize}[left=5pt]
  \item {\bf Vanilla}. 
  We use the term ``vanilla'' to denote our baseline algorithm without any learning component.
  For (proximal) gradient descent, this corresponds to Nesterov's method cold-started at the zero vector.
  For ADMM-based solvers, this corresponds to the solver cold-started at the zero vector where the hyperparameters are set to match the standard, non-relaxed ADMM algorithm.
  \item {\bf Nearest-neighbor warm start}. The nearest-neighbor method (used in~\citet{l2ws}) uses the same vanilla algorithm but warm-starts the new problem at the solution of the nearest training problem measured in terms of parameter distance.
  In all of our examples, this approach does not bring significant improvements. 
  We believe that this is because the problem parameters are too far apart.
  \item {\bf Backtracking line search}. For (proximal) gradient descent, we compare against a backtracking line search, a popular adaptive method.
  The computational burden of each iteration is larger than that for all other methods.
  \item {\bf Learned algorithm hyperparameters (LAH)~\citep{learn_algo_steps}}.
  This approach is the earlier work that we build off, which learns step sizes but does not include momentum terms.  
  Worst-case guarantees are not considered in this work.
  Empirically, we see that in some examples, this can lead to poor behavior.
  \item {\bf Learned metrics (LM)~\citep{metric_learning}}.
  This method uses a neural network to map the problem parameter $x$ to a \emph{metric} for operator-splitting algorithms like ADMM.
  We compare against it in our ADMM-based experiments.
  We use a neural network with five layers of $400$ nodes each as noted in~\citet{metric_learning}. %
  \item {\bf Learned warm starts (L2WS)~\citep{l2ws}}.
  This approach uses a neural network to predict a warm start from the parameter to improve performance after some pre-defined number of iterations.
  We use a neural network with two layers of $500$ nodes each and train on the distance to optimality as these choices generally yield the strongest results~\citep{l2ws}.
\end{itemize}

\begin{table}[!h]
  \small
  \centering
\renewcommand*{\arraystretch}{1.0}
\caption{Overview of the numerical examples.
In each example, we specify the base algorithm to which our learned acceleration approach is applied.
The type of worst-case guarantee we get depends on the algorithm and the structure of problem (see \Sec~\ref{subsec:eval_robustness}).
}
\label{table:num_weights}
\adjustbox{max width=\textwidth}{
\begin{tabular}{lllll}
\toprule
~ \begin{tabular}{@{}c@{}}numerical example ~\end{tabular} 
& \begin{tabular}{@{}c@{}}base algorithm ~\end{tabular} 
& \begin{tabular}{@{}c@{}}parameter  \\size $d$ \end{tabular} 
& \begin{tabular}{@{}c@{}}fixed-point \\variables $\fplen$  ~\end{tabular} 
& \begin{tabular}{@{}c@{}}worst-case guarantee \end{tabular}  \\
\midrule
\csvreader[head to column names, /csv/separator=semicolon, late after line=\\]{data/num_weights.csv}{
problem=\colA,
algo=\colB,
d=\colC,
n=\colD,
metric=\colE
}{\colA & \colB & \colC & \colD & \colE }
\bottomrule
\end{tabular}
}
\end{table}

\subsection{Accelerated gradient descent}\label{subsec:num_gd}
In this subsection, we apply our method to learn the hyperparameters for accelerated gradient descent.
We demonstrate its effectiveness for logistic regression in \Subsec~\ref{subsubsec:logistic}.

\subsubsection{Logistic regression}\label{subsubsec:logistic}
We study the standard binary classification task of logistic regression. 
The dataset consists of feature vectors $\{a_j\}_{j=1}^m$ where each $a_j \in \reals^q$, and corresponding binary labels $\{l_j\}_{j=1}^m$ where each $l_j \in \{0, 1\}$.
This learning problem is formulated as the convex optimization problem
\begin{equation}
  \begin{array}{ll}
  \label{prob:logisticgd_example}
  \mbox{minimize} & (1 / m) \sum_{j=1}^m l_j \log \left( \sigma(w^T a_j + b) \right) + (1 - l_j) \log \left(1 - \sigma(w^T a_j + b) \right),
  \end{array}
\end{equation}
where $\sigma : \reals \to (0,1)$ denotes the sigmoid function, defined as $\sigma(z) = 1 / (1 + \exp(-z))$. 
The decision variables are the weight vector $w \in \reals^q$ and bias term $b \in \reals$.
In our formulation, the problem parameter is the dataset: $x = (a_1, \dots, a_m, l_1, \dots, l_m) \in \reals^{(q+1)m}$.
An upper bound on the smoothness of the objective of problem~\eqref{prob:logisticgd_example} can be calculated by taking the maximum of the eigenvalue of $(1/(4m)) A(x)^T A(x)$ where $A(x) \in \reals^{q \times m}$ is formed by stacking the $a_j$ vectors.

\myparagraph{Numerical example}
In our numerical experiment we consider logistic regression problems of classifying MNIST images into two classes.
To do so, for each problem, we randomly select two different classes of digits (from $0$ to $9$) and randomly select $100$ images from each class to form a dataset of $m=200$ data points.
We let $f$ be the objective of~\eqref{prob:logisticgd_example}, and define $\mathcal{X} = \{x \mid (1/(4m)) A(x)^T A(x) \leq L\}$, where the estimated smoothness value $L$ is taken as the largest smoothness value over the training set.
Hence, $(f,\mathcal{X})$ is $\mathcal{F}_{0,L}$-parametrized.
We train for robustness using $\gamma^{\rm target} = 0.2$ and $\gamma^{\rm target} = 0.8$.
We use the backtracking line search method from~\citet[Algorithm~3.1]{NoceWrig06}.

\myparagraph{Results}
We illustrate the effectiveness of our approach in Figure~\ref{fig:logistic_regression_results} and Table~\ref{tab:logistic}.
In this example, there is a major benefit in learning the momentum sizes as opposed to only learning the step sizes.
There is a clear tradeoff between robustness and empirical performance on the problem distribution.

\begin{figure}[!h]
  \centering
  \includegraphics[width=0.85\linewidth]{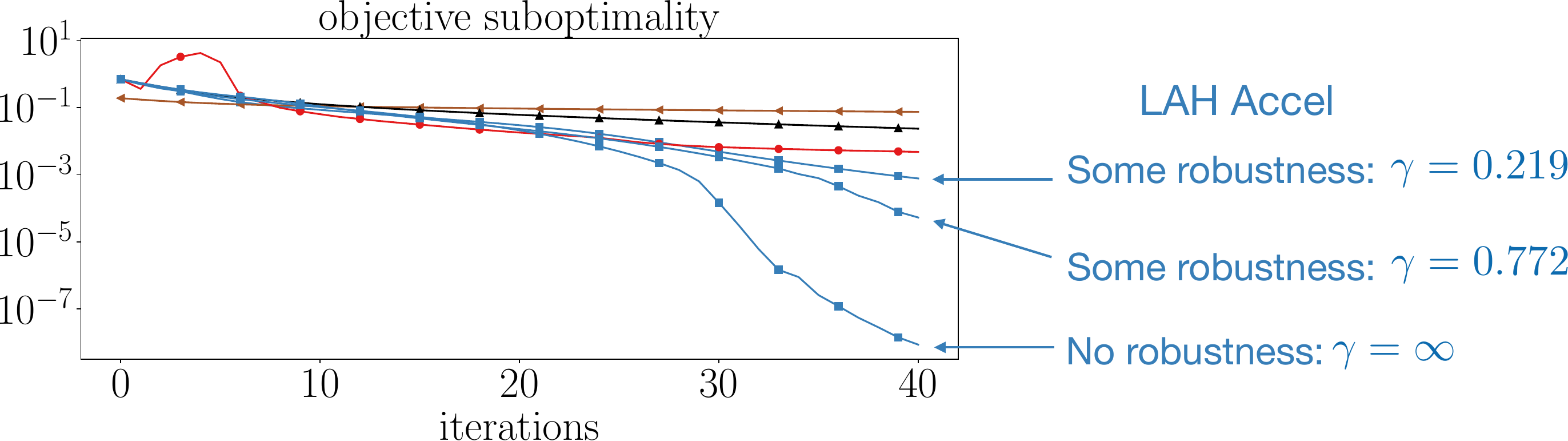}
    \\
    \legendlogistic
    \caption{Logistic regression results.
    The LAH Accel method with no robustness outperforms the LAH method by about $6$ orders of magnitude.
    When trained with robustness, the LAH Accel still outperforms other methods by wide margins.
    }
    \label{fig:logistic_regression_results}
\end{figure}

\begin{table}[!h]
  \centering
  \small
  \renewcommand*{\arraystretch}{1.0}
  \caption{Logistic regression. \itersunconstrained}
  \label{tab:logistic}
  \vspace*{-3mm}
  \adjustbox{max width=\textwidth}{
    \begin{tabular}{llllllllll}
      \toprule
      \begin{tabular}{@{}c@{}}Tol. \end{tabular} &
      \begin{tabular}{@{}c@{}}Nesterov\\$\gamma=0.011$\end{tabular} &
      \begin{tabular}{@{}c@{}}Nearest \\ neighbor\\$N=10k$\end{tabular} &
      \begin{tabular}{@{}c@{}}Backtracking\\line search\end{tabular} &
      \begin{tabular}{@{}c@{}}L2WS\\$N=10$\end{tabular} &
      \begin{tabular}{@{}c@{}}L2WS\\$N=10k$\end{tabular} &
      \begin{tabular}{@{}c@{}}LAH\\$\gamma=\infty$\\$N=10$\end{tabular} &
      \begin{tabular}{@{}c@{}}LAH Accel\\ $\gamma=\infty$\\$N=10$\end{tabular} &
      \begin{tabular}{@{}c@{}}LAH Accel \\ $\gamma=0.219$\\$N=10$\end{tabular} &
      \begin{tabular}{@{}c@{}}LAH Accel \\ $\gamma=0.772$\\$N=10$\end{tabular} \\
      \midrule
      \csvreader[
        head to column names,
        late after line=\\
      ]{./data/logistic_regression/accuracies.csv}{
        accuracies=\colA,
        nesterov=\colB,
        nearest_neighbor=\colC,
        backtracking=\colD,
        l2ws=\colE,
        l2ws10000=\colF,
        learned_no_accel=\colG,
        lah_accel=\colH,
        lah_accel_1=\colI,
        lah_accel_2=\colJ
      }{
        \colA & \colB & \colC & \colD & \colE &  \colF & \colG & \colH & \colI & \colJ
      }
      \bottomrule
    \end{tabular}
  }
\end{table}

\subsection{Accelerated proximal gradient descent}\label{subsec:num_prox_gd}
In this subsection, we use our method to learn the hyperparameters for accelerated proximal gradient descent.
We demonstrate its effectiveness for sparse coding in \Subsec~\ref{subsubsec:lasso} and non-negative least squares in \Subsec~\ref{subsubsec:nonneg_ls}.

\subsubsection{Sparse coding}\label{subsubsec:lasso}
In sparse coding, the goal is to reconstruct an input from a sparse linear combination of bases.
A standard approach is to solve the lasso problem~\citep{tibshirani1996regression}, formulated as
\begin{equation}
  \begin{array}{ll}
  \label{prob:lasso}
  \mbox{minimize} & (1 / 2) \|A z - x\|_2^2 + \nu \|z\|_1,
  \end{array}
\end{equation}
where $z \in \reals^\fplen$ is the decision variable, $A \in \reals^{d \times \fplen}$, and $\nu \in \reals_{++}$ are problem data fixed for all instances, and $x \in \reals^d$ is the problem parameter.
The proximal operator of the $\ell_1$-norm is $\prox_{\alpha \|\cdot\|_1}(v) = {\bf sign}(v) \max(0, |v| - \alpha)$.

\myparagraph{Numerical example}
To generate a family of lasso problems, we follow the setup in~\citet{alista, l2o}. 
We sample the entries of the matrix $A$ with i.i.d. Guassian entries $\mathcal{N}(0,1/m)$, and then normalize each column of $A$ so that it has norm one.
For each problem instance, we sample a ground truth vector \( z^{\rm true} \in \reals^n \) with i.i.d.\ standard normal entries and randomly set $90\%$ of the entries to zero. 
The observation vector is given by $b = A z^{\rm true} + \epsilon$, where the signal-to-noise ratio is fixed at $40$ dB. 
We set $d = 250$ and $n = 500$ in our example and let $\mathcal{X} = \reals^d$.
We define the smooth part of problem~\eqref{prob:lasso} as $f(z,x) = (1/2) \|A z - x\|_2^2$; the tuple $(f,\mathcal{X})$ is $\mathcal{Q}_{0,L}$-parametrized (from Example~\ref{ex:least_squares}), where $0$ and $L$ are the smallest and largest eigenvalues of $A^T A$.
We include an additional baseline that is specialized for sparse coding: LISTA~\citep{lista} which learns sequences of weight matrices used in proximal gradient descent.
We train our method for robustness using $\gamma^{\rm target} = 0.1$.

In this example, we also illustrate the advantages of robustness against out-of-distribution problem instances.
To do so, we generate a new dataset that is created in the same way except the ground truth vector is drawn i.i.d. from the distribution $\mathcal{N}(0,3)$ (\ie, the variance is tripled), again with $90\%$ of the values set to zero.

\myparagraph{Results}
We show the effectiveness of our approach in Figures~\ref{fig:lasso_results} and~\ref{fig:lasso_ood_results} and Table~\ref{tab:sparse_coding}.
In Figure~\ref{fig:lasso_step_sizes}, we show the learned step sizes and momentum sizes for our method.
We use the backtracking line-search procedure described in \citep[Algorithm 3.5]{scheinberg2014fast}.
We also train LISTA for $30$ iterations and switch to ISTA after so that subsequent steps provably converge. %
Even with $10000$ training instances, we observe a significant gap between the training and test performance for LISTA (which has $7.5$ million weights).
Within $30$ iterations, the LAH Accel method outperforms LISTA by orders of magnitude.
We show that training for robustness is essential for strong performance on out-of-distribution problems in Figure~\ref{fig:lasso_ood_results}.
The non-robust LAH and LAH Accel reach a suboptimality that is $8$ or more orders of magnitude higher than the initial point.

\begin{figure}[!h]
  \centering
  \includegraphics[width=0.85\linewidth]{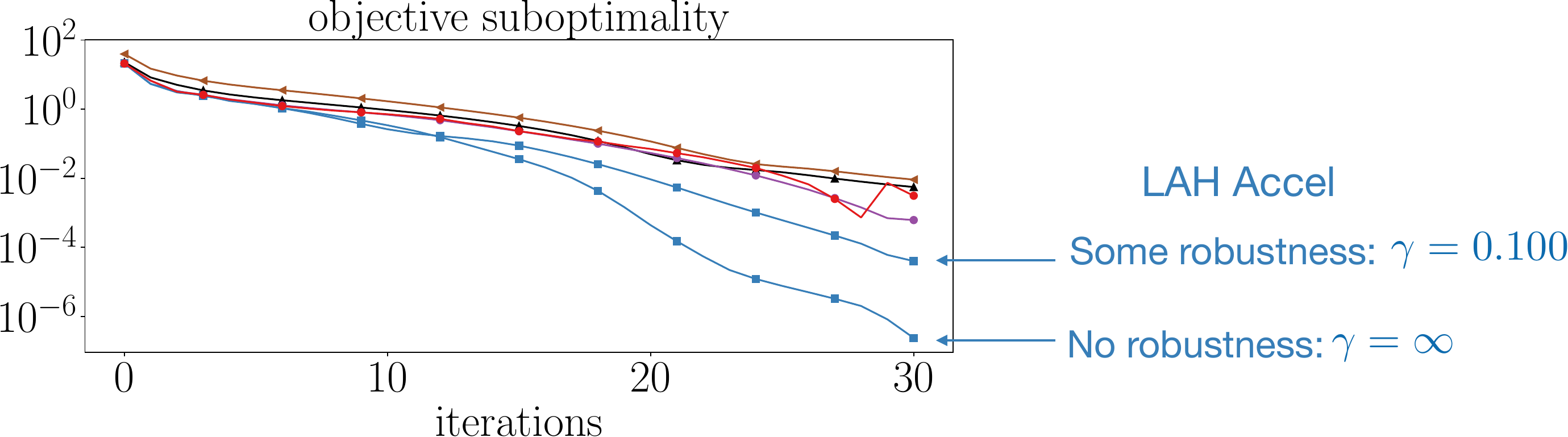}
    \\
    \legendlasso
    \caption{Sparse coding \emph{in-distribution} results.
    The non-robust LAH Accel scheme results in $\gamma=\infty$ and achieves a benefit of about $3$ orders of magnitude after $30$ iterations.
    Here, we pay a relatively small price for robustness since the $\gamma$ value is about $5$ times that of Nesterov's method, but it is still $2$ orders of magnitude better over the parametric family.
    }
    \label{fig:lasso_results}
\end{figure}

\begin{table}[!h]
  \centering
  \small
  \renewcommand*{\arraystretch}{1.0}
  \caption{Sparse coding. \itersunconstrained}
  \label{tab:sparse_coding}
  \vspace*{-3mm}
  \adjustbox{max width=\textwidth}{
    \begin{tabular}{llllllllllll}
      \toprule
      \begin{tabular}{@{}c@{}}Tol. \end{tabular} &
      \begin{tabular}{@{}c@{}}Nesterov\\$\gamma=0.010$\end{tabular} &
      \begin{tabular}{@{}c@{}}Near. \\ neigh. \\ $N=10k$\end{tabular} &
      \begin{tabular}{@{}c@{}}Line \\search\end{tabular} &
      \begin{tabular}{@{}c@{}}L2WS\\$N=10$\end{tabular} &
      \begin{tabular}{@{}c@{}}L2WS\\$N=10k$\end{tabular} &
      \begin{tabular}{@{}c@{}}LISTA\\$N=10$\end{tabular} &
      \begin{tabular}{@{}c@{}}LISTA\\$N=10k$\end{tabular} &
      \begin{tabular}{@{}c@{}}LAH \\$\gamma=\infty$ \\ $N=10$\end{tabular} &
      \begin{tabular}{@{}c@{}}LAH \\guarded \\$N=10$\end{tabular} &
      \begin{tabular}{@{}c@{}}LAH Accel\\ $\gamma=\infty$\\$N=10$\end{tabular} &
      \begin{tabular}{@{}c@{}}LAH Accel \\ $\gamma=0.100$\\$N=10$\end{tabular}
       \\
      \midrule
      \csvreader[
        head to column names,
        late after line=\\
      ]{./data/sparse_coding/accuracies.csv}{
        accuracies=\colA,
        nesterov=\colB,
        nearest_neighbor=\colC,
        backtracking=\colD,
        l2ws=\colE,
        l2ws10000=\colF,
        lista=\colG,
        lista10000=\colH,
        learned_no_accel=\colI,
        learned_no_accel_safeguard=\colJ,
        lah_accel=\colK,
        lah_accel_1=\colL,
      }{
        \colA & \colB & \colC & \colD & \colE & \colF & \colG & \colH & \colI & \colJ & \colK & \colL
      }
      \bottomrule
    \end{tabular}
  }
\end{table}

\begin{figure}[!h]
  \centering
  \includegraphics[width=0.85\linewidth]{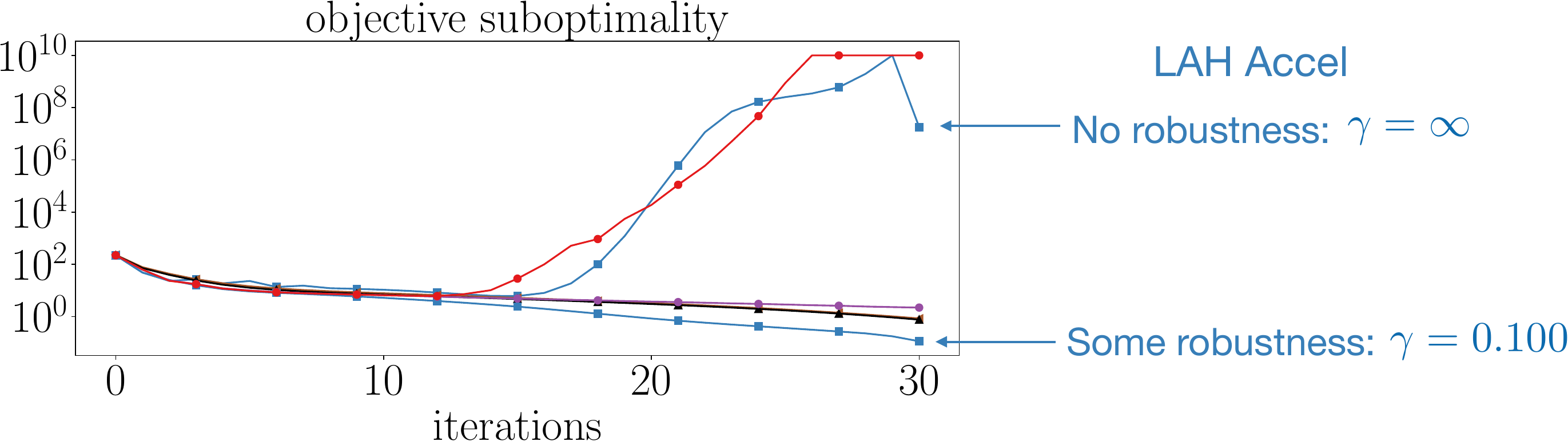}
    \\
    \legendlasso
    \caption{Sparse coding \emph{out-of-distribution} results.
    Both the LAH method (without safeguarding) and the non-robust LAH Accel method diverge. %
    LAH Accel trained with robustness achieves a suboptimality level $6$ times better than Nesterov's method after $30$ iterations.
    }
    \label{fig:lasso_ood_results}
\end{figure}

\begin{figure}[!h]
  \centering
  \includegraphics[width=0.95\linewidth]{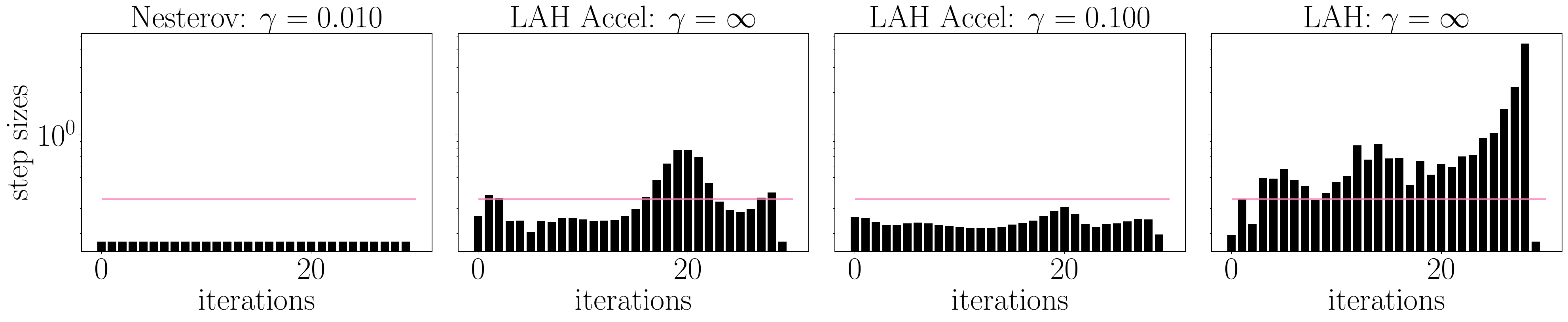}
  \includegraphics[width=0.95\linewidth]{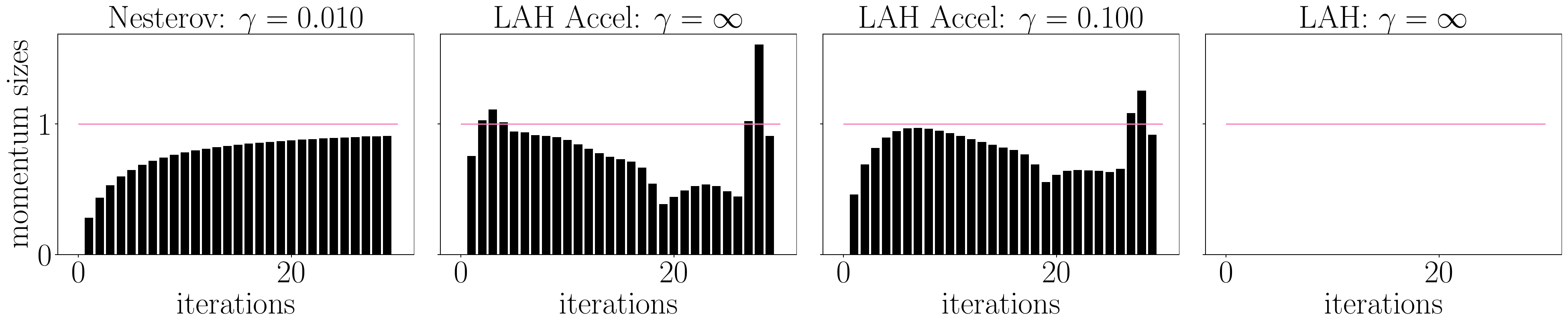}
    \\
    \legendlogisticstep
    \caption{Sparse coding step sizes.
    The y-scale of the step sizes row is logarithmic.
    The LAH Accel column trained with robustness has step sizes all below $2/L$ and all but two momentum sizes below $1$. 
    The non-robust LAH Accel method generally learns larger values.
    Nearly all of the step sizes from LAH are above $(2/L)$, which explains why $\gamma=\infty$ for that method.
    }
    \label{fig:lasso_step_sizes}
\end{figure}

\subsubsection{Non-negative least squares}\label{subsubsec:nonneg_ls}
We now consider the problem of non-negative least squares, formulated as
\begin{equation*}
  \begin{array}{lll}
  \label{prob:nonneg_ls}
  \mbox{minimize} & (1/2)\|Az - x\|_2^2 \quad
  \mbox{subject to} & z \geq 0,
  \end{array}
  \end{equation*}
where $z \in \reals^n$ is the decision variable, $A \in \reals^{d \times n}$ is problem data fixed for all instances, and $x \in \reals^d$ is the problem parameter.

\myparagraph{Numerical example}
We set $d = 700$ and $n = 500$, and create a random $A$ in the same way as we did for sparse coding.
We sample each entry of $x$ i.i.d. from the distribution $\mathcal{N}(0,1)$.
We define $(f,\mathcal{X})$ as in the previous example, which makes the pair $\mathcal{Q}_{\mu,L}$-parametrized, where $\mu$ and $L$ are the smallest and largest eigenvalues of $A^T A$.
We train for robustness using $\gamma^{\rm target} = 0.7$.

\myparagraph{Results}
We showcase the effectiveness of our approach in Figure~\ref{fig:nonneg_ls_results} and Table~\ref{tab:nonneg_ls}.
Again, the LAH Accel method with no robustness gives the strongest empirical performance over the parametric family.
When trained for robustness, LAH Accel about performs as well as LAH---but the latter has no robustness guarantee (\ie, $\gamma=\infty$).

\begin{figure}[!h]
  \centering
  \includegraphics[width=0.85\linewidth]{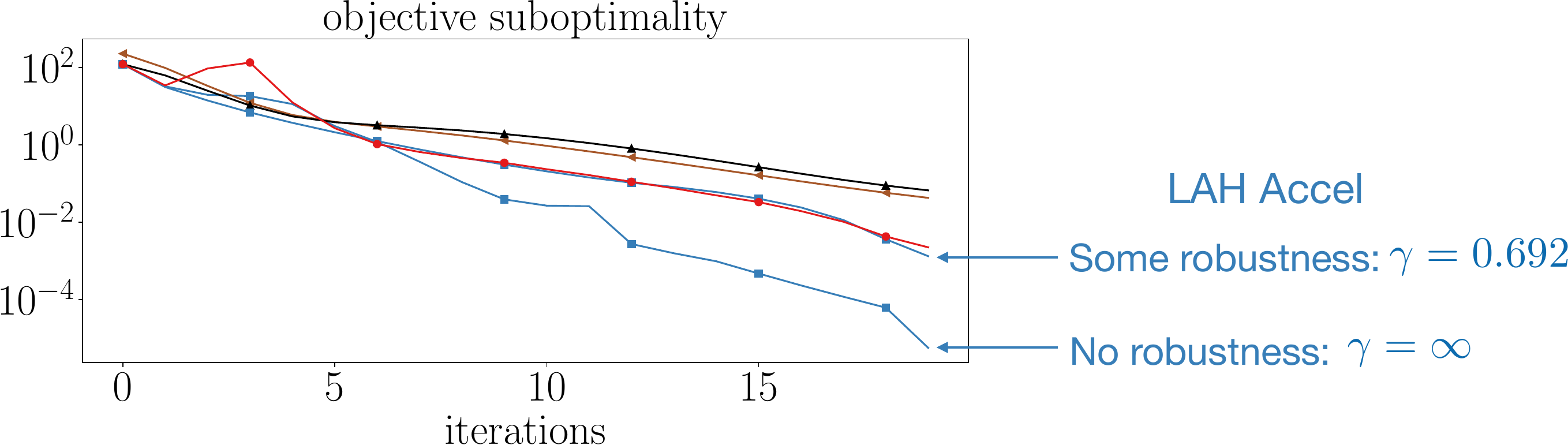}
    \\
    \legendnonnegls
    \caption{Non-negative least squares results.
    The non-robust LAH Accel method performs best.
    The robust LAH Accel method performs similarly to LAH, but LAH is not robust.
    }
    \label{fig:nonneg_ls_results}
\end{figure}

\begin{table}[!h]
  \centering
  \small
  \renewcommand*{\arraystretch}{1.0}
  \caption{Non-negative least squares. \itersunconstrained}
  \label{tab:nonneg_ls}
  \vspace*{-3mm}
  \adjustbox{max width=\textwidth}{
    \begin{tabular}{lllllllll}
      \toprule
      \begin{tabular}{@{}c@{}}Tol. \end{tabular} &
      \begin{tabular}{@{}c@{}}Nesterov \\$\gamma=3.06$\end{tabular} &
      \begin{tabular}{@{}c@{}}Near. \\ neigh. \\ $N=10k$\end{tabular} &
      \begin{tabular}{@{}c@{}}Line \\search\end{tabular} &
      \begin{tabular}{@{}c@{}}L2WS\\$N=10$\end{tabular} &
      \begin{tabular}{@{}c@{}}L2WS\\$N=10k$\end{tabular} &
      \begin{tabular}{@{}c@{}}LAH \\ $\gamma=\infty$\\$N=10$\end{tabular} &
      \begin{tabular}{@{}c@{}}LAH Accel\\ $\gamma=\infty$\\$N=10$\end{tabular} &
      \begin{tabular}{@{}c@{}}LAH Accel \\ $\gamma=0.692$\\$N=10$\end{tabular}
       \\
      \midrule
      \csvreader[
        head to column names,
        late after line=\\
      ]{./data/nonneg_ls/accuracies.csv}{
        accuracies=\colA,
        nesterov=\colB,
        nearest_neighbor=\colC,
        backtracking=\colD,
        l2ws=\colE,
        l2ws10000=\colF,
        learned_no_accel=\colI,
        lah_accel=\colK,
        lah_accel_1=\colL,
      }{
        \colA & \colB & \colC & \colD & \colE & \colF & \colI & \colK & \colL
      }
      \bottomrule
    \end{tabular}
  }
\end{table}

\subsection{Accelerated OSQP}\label{subsec:num_osqp}
In this subsection, we apply our method to learn the hyperparameters of an accelerated version of OSQP~\citep{osqp} which is based on ADMM.
In \Subsec~\ref{subsubsec:quadcopter}, we focus on the task of controlling a quadcopter.

\subsubsection{Model predictive control of a quadcopter}\label{subsubsec:quadcopter}
We now consider the task of controlling a quadcopter to follow a reference trajectory using MPC~\citep{borrelli_mpc_book}.
In MPC, we optimize over a finite horizon but apply only the first control input, then re-solve the problem with a new initial state.
We model the quadcopter as a rigid body controlled by four motors using quaternions~\citep{quadcopter_dynamics_mpc}.
The state of the quadcopter $s_t \in \reals^{n_s}$ consists of the position, velocity, and quaternions.
The control of the quadcopter $u_t \in \reals^{n_s}$ consists of vertical thrust and the angular velocities of the body frame.
The non-linear dynamics of the quadcopter can be found in~\citep[\Sec~6.3.1]{l2ws}.
Since the dynamics are non-linear (thus rendering the MPC problem to be non-convex) we linearize them around the current state $s_0$ and the most recent control input denoted as $u_{-1}$~\citep{nonlinear_mpc}.
This yields a linearized dynamics of the form $s_{t+1} \approx A s_t + B u_t$ where $A \in \reals^{n_s \times n_s}$ and $B \in \reals^{n_s \times n_u}$.
At each time step, we aim to track a reference trajectory given by $s^{\rm{ref}} = (s^{\rm{ref}}_1, \dots, s^{\rm{ref}}_T)$, while satisfying constraints on the states and the controls.
We solve the following quadratic program in each timestep:
\begin{equation*}
  \begin{array}{ll}
  \label{eq:quadcopter_qp}
  \mbox{minimize} & (s_{T} - s_T^{\rm{ref}})^T Q_{T} (s_{T} - s_T^{\rm{ref}}) + \sum_{t=1}^{T-1} (s_{t} - s_t^{\rm{ref}})^T Q (s_{t} - s_t^{\rm{ref}}) + u_t^T R u_t \\
  \mbox{subject to} & s_{t+1} = A s_t + B u_t, \quad t=0, \dots, T-1 \\
  & u_{\textrm{min}} \leq u_t \leq u_{\textrm{min}}, |u_{t+1} - u_t| \leq \Delta u, \quad t=0, \dots, T-1 \\
  & s_{\textrm{min}} \leq s_t \leq s_{\textrm{max}}, \quad t=1, \dots, T.
  \end{array}
\end{equation*}
Here, the decision variables are the states $(s_1, \dots, s_{T})$ where $s_t \in \reals^{n_s}$ and the controls $(u_0, \dots, u_{T-1})$ where $u_t \in \reals^{n_u}$.
The problem parameter is $x = (s_0, u_{-1}, s_1^{\rm ref}, \dots, s_T^{\rm ref})$.

\myparagraph{Numerical example}
In this example, we follow the setup from~\citet{l2ws}.
In this example, the problem matrix change across instances (see the OSQP row in Table~\ref{table:fp_algorithms}); the LAH approach is not suitable since a new factorization in each iteration is required~\citep[\Sec~4]{learn_algo_steps}.
Since our problems are sequential, we instead compare against the \emph{previous-solution warm start} method, which warm starts the current problem with the solution of the previous one shifted by one time index~\citep{nonlinear_mpc}.
We let $\mathcal{X} = \reals^d$.
The underlying fixed-point operator for OSQP is $R(\theta^{\rm inv})$-nonexpansive (see Proposition~\ref{prop:admm_nonexp}) where $\theta^{\rm inv}$ is learned.
We train for robustness using $\gamma^{\rm target} = 0.1$.

\myparagraph{Results}
We showcase the effectiveness of our method in Figure \ref{fig:quadcopter_results} and Table~\ref{tab:quadcopter}.
Training with robustness yields solutions of similar quality, but $21$ times better in terms of robustness.
In this example, we visualize the effectiveness of our approach by implementing the control of the quadcopter in closed-loop, where in each iteration, there is a strict budget of iterations.

\begin{figure}[!h]
  \centering
  \includegraphics[width=0.85\linewidth]{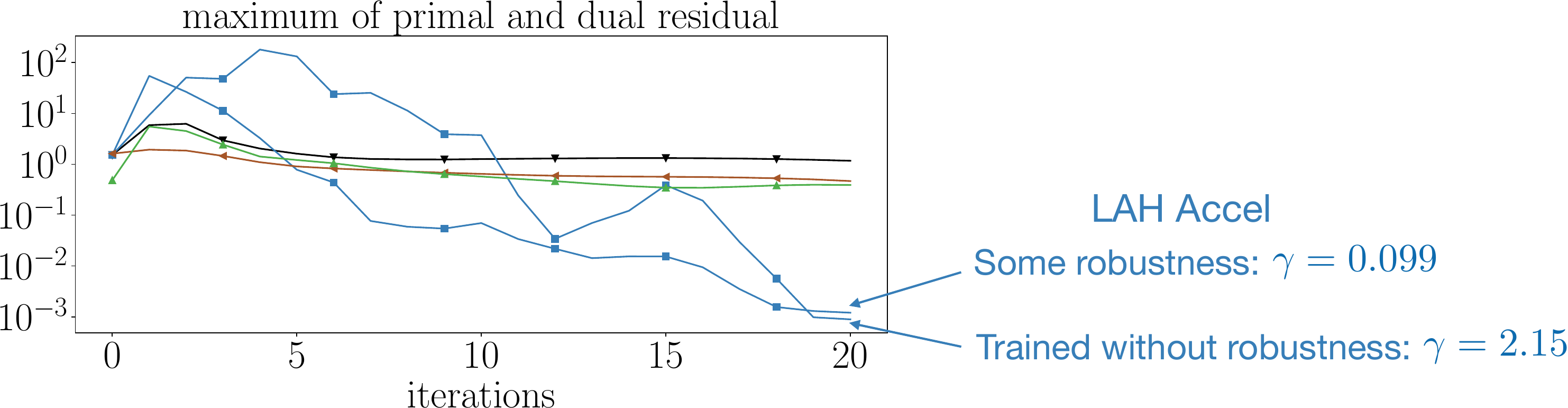}
    \\
    \legendquad
    \caption{Quadcopter results.
    Both LAH Accel methods outperform the others by a wide margin, with the robust variant performing nearly as well as the non-robust one.
    }
    \label{fig:quadcopter_results}
\end{figure}

\begin{table}[!h]
  \centering
  \small
  \renewcommand*{\arraystretch}{1.0}
  \caption{Quadcopter. \iters}
  \label{tab:quadcopter}
  \vspace*{-3mm}
  \adjustbox{max width=\textwidth}{
    \begin{tabular}{lllllllllll}
      \toprule
      \begin{tabular}{@{}c@{}}Tol. \end{tabular} &
      \begin{tabular}{@{}c@{}}Cold \\start\\$\gamma=0.071$\end{tabular} &
      \begin{tabular}{@{}c@{}}Nearest \\ neighbor\\$N=10k$\end{tabular} &
      \begin{tabular}{@{}c@{}}Prev.\\sol.\end{tabular} &
      \begin{tabular}{@{}c@{}}L2WS \\$N=10$\end{tabular} &
      \begin{tabular}{@{}c@{}}L2WS \\$N=10k$\end{tabular} &
      \begin{tabular}{@{}c@{}}LM\\ $N=10$\end{tabular} &
      \begin{tabular}{@{}c@{}}LM \\ $N=10k$\end{tabular} &
      \begin{tabular}{@{}c@{}}LAH Accel\\$\gamma=0.099$\\$N=10$\end{tabular} &
      \begin{tabular}{@{}c@{}}LAH Accel\\$\gamma=2.15$\\$N=10$\end{tabular}\\
      \midrule
      \csvreader[
        head to column names,
        late after line=\\
      ]{./data/quadcopter/accuracies.csv}{
        accuracies=\colA,
        cold_start=\colB,
        nearest_neighbor=\colC,
        prev_sol=\colD,
        l2ws=\colE,
        l2ws10000=\colF,
        lm=\colG,
        lm10000=\colH,
        lah_accel=\colJ,
        lah_accel_1=\colK
      }{
        \colA & \colB & \colC & \colD & \colE & \colF & \colG & \colH & \colJ & \colK
      }
      \bottomrule
    \end{tabular}
  }
\end{table}

\begin{figure}[!h]
  \centering
  \begin{subfigure}[t]{0.18\linewidth}
    \centering
    \includegraphics[width=1\textwidth]{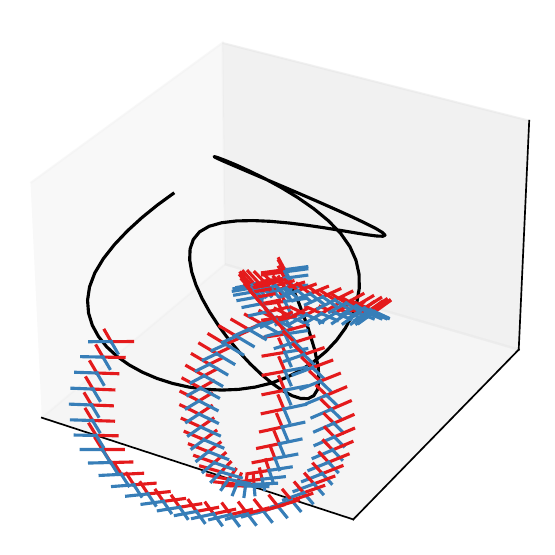}
    \label{fig:quad_prev_sol2}
  \end{subfigure}%
  \hspace*{4mm}
  \begin{subfigure}[t]{0.18\linewidth}
    \centering
    \includegraphics[width=1\linewidth]{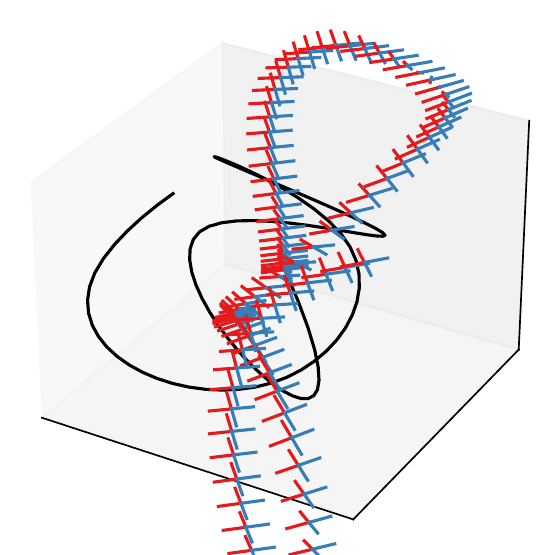}
    \label{fig:quad_nearest_neighbor2}
  \end{subfigure}
  \hspace*{4mm}
  \begin{subfigure}[t]{0.18\linewidth}
    \centering
    \includegraphics[width=1\linewidth]{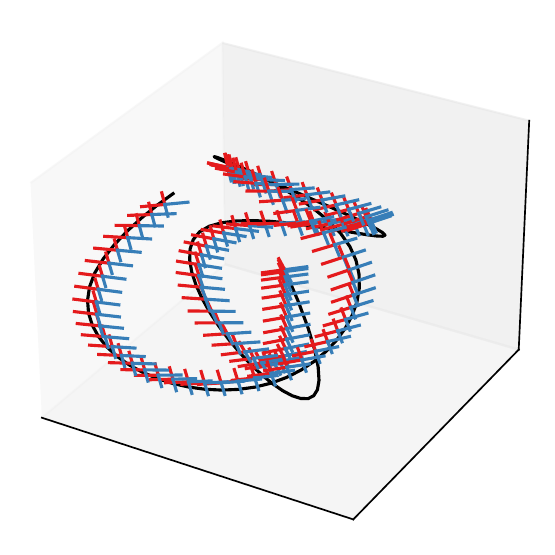}
    \label{fig:quad_k5_reg2}
  \end{subfigure}\\[-20pt]
  \begin{subfigure}[t]{0.18\linewidth}
    \centering
    \includegraphics[width=1\textwidth]{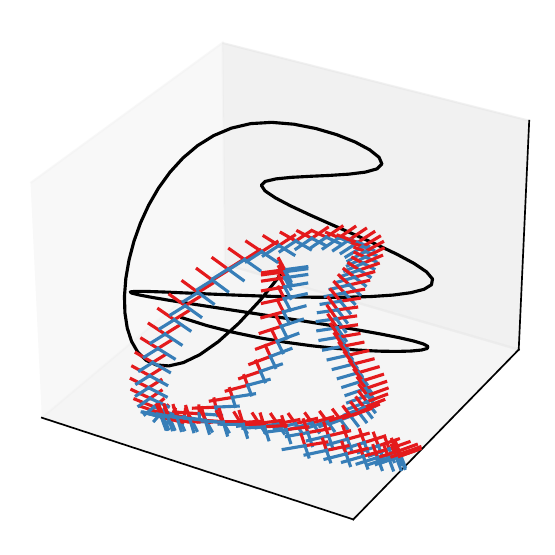}
    \caption{\centering{previous solution $80$ iters}}
    \label{fig:quad_prev_sol3}
  \end{subfigure}%
  \hspace*{4mm}
  \begin{subfigure}[t]{0.18\linewidth}
    \centering
    \includegraphics[width=1\linewidth]{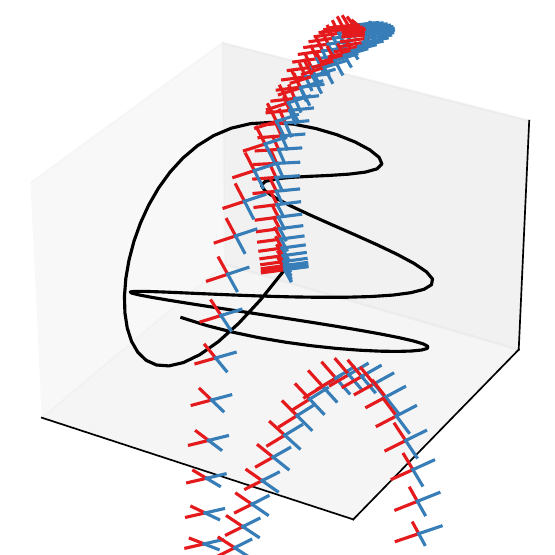}
    \caption{\centering{nearest neighbor $80$ iters}}
    \label{fig:quad_nearest_neighbor}
  \end{subfigure}
  \hspace*{4mm}
  \begin{subfigure}[t]{0.18\linewidth}
    \centering
    \includegraphics[width=1\linewidth]{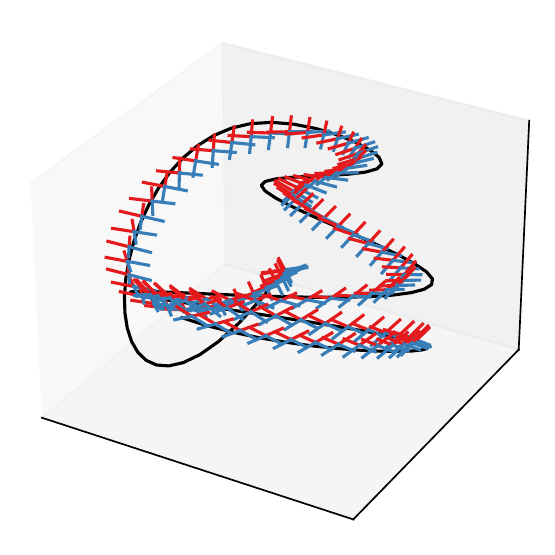}
    \caption{\centering{LAH Accel $20$ iters}}
    \label{fig:quad_k5_reg3}
  \end{subfigure}
  \caption{Quadcopter visualizations.
    Each row corresponds to a different unseen trajectory.
    The LAH Accel approach is able to track the reference trajectory fairly well.
    Even with $4$ times as many iterations, the other methods fail to track the trajectory.
  }
  \label{fig:quadcopter_visualize}
\end{figure}

\subsection{Accelerated SCS}\label{subsec:num_scs}
In this subsection, we apply our method to learn the hyperparameters of an accelerated version of SCS~\citep{scs_quadratic} which is based on ADMM.
In \Subsec~\ref{subsubsec:rkf}, we apply our method to the task of robust Kalman filtering.

\subsubsection{Robust Kalman filtering}\label{subsubsec:rkf}
We now consider the task of tracking an autonomous vehicle provided noisy measurement data.
This can be modeled as a linear dynamical system given by the equations $s_{t+1} = A s_t + B u_t, y_{t} = C s_t + v_t$
for $t=0,1,\dots$ where $s_t \in \reals^{n_s}$ gives the state of the system at time $t$, $y_t \in \reals^{n_o}$ gives the observations at time $t$, and $u_t \in \reals^{n_u}$ and $v_t \in \reals^{n_o}$ are noise injected into the system.
The system matrices are $A \in \reals^{n_s \times n_s}$, $B \in \reals^{n_s \times n_u}$, and $C \in \reals^{n_o \times n_s}$.
Kalman filtering estimates states $\{s_t\}$ given noisy measurements $\{y_t\}$. 
The standard assumption of Gaussian i.i.d. noise, however, can degrade its performance under outliers~\citep{rkf}. 
Robust Kalman filtering addresses this issue, which can be formulated as
\begin{equation}
  \begin{array}{ll}
  \label{prob:rkf}
  \mbox{minimize} & \sum_{t=1}^{T-1} \|u_t\|_2^2 + \nu \psi_{\rho} (v_t) \\
  \mbox{subject to} & s_{t+1} = A s_t + B u_t, \quad  y_t = C s_t + v_t, \quad t=0,\dots, T-1.
  \end{array}
\end{equation}
Here, the Huber penalty function~\citep{huber} parametrized by $\rho \in \reals_{++}$ 
is given as $\psi_{\rho}(a) = \|a\|_2^2$ if $\|a\|_2 \leq \rho$ and $2 \rho \|a\|_2 - \rho^2$ otherwise; smaller $\rho$ implies more robustness to outliers. 
The problem parameter is given by the measurements $x=(y_0,\dots,y_{T-1})$.
Problem~\eqref{prob:rkf} can be formulated as a second-order cone program (SOCP)~\citep{neural_fp_accel_amos}.

\myparagraph{Numerical example}
We follow the numerical setup used in~\citet{neural_fp_accel_amos,l2ws,learn_algo_steps} to track a vehicle moving in two-dimensional space.
The state and input sizes are $n_s=4$ and $n_u=2$.
We refer the reader to~\citep[\Sec~6.4.1]{l2ws} for the values of $A$, $B$, and $C$.
We let $\mathcal{X} = \reals^d$.
The underlying fixed-point operator for OSQP is $R(\theta^{\rm inv})$-nonexpansive (see Proposition~\ref{prop:admm_nonexp}) where $\theta^{\rm inv}$ is learned.
We train for robustness using $\gamma^{\rm target} = 0.1$.

\myparagraph{Results}
We showcase the effectiveness of our approach in Figure~\ref{fig:rkf_results} and Table~\ref{tab:rkf}.
Since we solve these SOCPs in sequence, we compare against the previous-solution warm start method described in \Subsec~\ref{subsubsec:quadcopter}.
The LAH Accel methods dramatically outperforms all other methods.
In this case, both of robustness values are below the target values. %
We visualize the benefits of the learning acceleration approach in Figure~\ref{fig:rkf_visuals}.
To better visualize differences between methods, we reduce the iteration budget to $5$ (and hence, train using $K=5$) and repeat the learning process.

\begin{figure}[!h]
  \centering
  \includegraphics[width=0.85\linewidth]{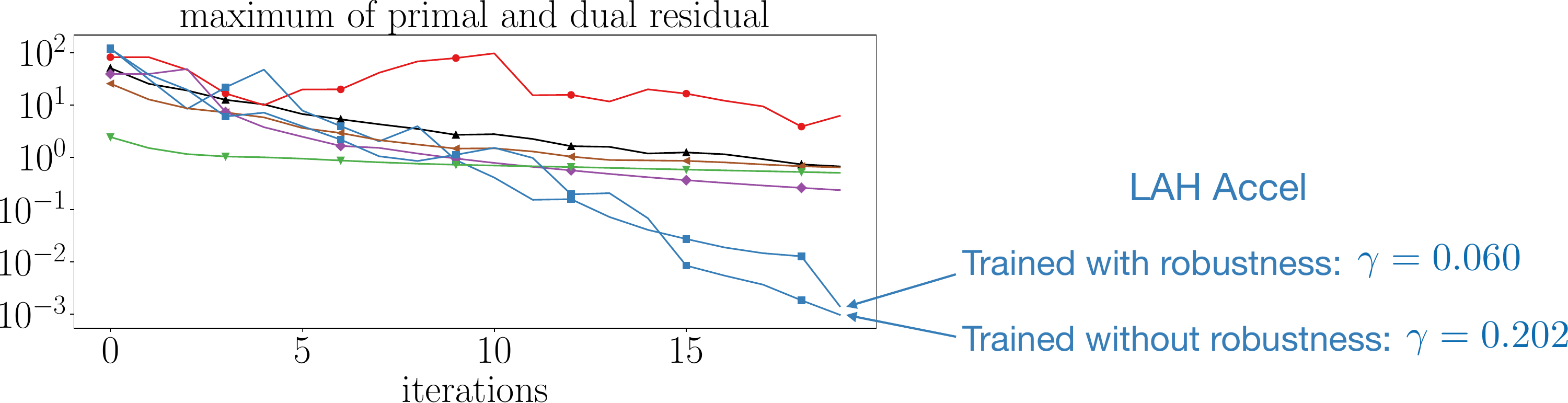}
    \\
    \legendrkf
    \caption{Robust Kalman filtering results.
    Both LAH Accel methods dramatically outperform the other methods while also achieving robustness.
    }
    \label{fig:rkf_results}
\end{figure}

\begin{table}[!h]
  \centering
  \small
  \renewcommand*{\arraystretch}{1.0}
  \caption{Robust Kalman filtering. \iters}
  \label{tab:rkf}
  \vspace*{-3mm}
  \adjustbox{max width=\textwidth}{
    \begin{tabular}{lllllllllll}
      \toprule
      \begin{tabular}{@{}c@{}}Tol. \end{tabular} &
      \begin{tabular}{@{}c@{}}Cold \\start\\$\gamma=0.071$\end{tabular} &
      \begin{tabular}{@{}c@{}}Nearest \\ neighbor\\$N=10k$\end{tabular} &
      \begin{tabular}{@{}c@{}}Prev.\\sol.\end{tabular} &
      \begin{tabular}{@{}c@{}}L2WS \\$N=10$\end{tabular} &
      \begin{tabular}{@{}c@{}}L2WS \\$N=10k$\end{tabular} &
      \begin{tabular}{@{}c@{}}LM\\ $N=10$\end{tabular} &
      \begin{tabular}{@{}c@{}}LM \\ $N=10k$\end{tabular} &
      \begin{tabular}{@{}c@{}}LAH\\$N=10$\end{tabular} &
      \begin{tabular}{@{}c@{}}LAH Accel\\$\gamma=0.060$\\$N=10$\end{tabular} &
      \begin{tabular}{@{}c@{}}LAH Accel\\$\gamma=0.202$\\$N=10$\end{tabular}\\
      \midrule
      \csvreader[
        head to column names,
        late after line=\\
      ]{./data/robust_kalman/accuracies.csv}{
        accuracies=\colA,
        cold_start=\colB,
        nearest_neighbor=\colC,
        prev_sol=\colD,
        l2ws=\colE,
        l2ws10000=\colF,
        lm=\colG,
        lm10000=\colH,
        learned_no_accel=\colI,
        lah_accel=\colJ,
        lah_accel_1=\colK
      }{
        \colA & \colB & \colC & \colD & \colE & \colF & \colG & \colH & \colI & \colJ & \colK
      }
      \bottomrule
    \end{tabular}
  }
\end{table}

\begin{figure}[!h]
  \centering
  \includegraphics[width=0.23\linewidth]{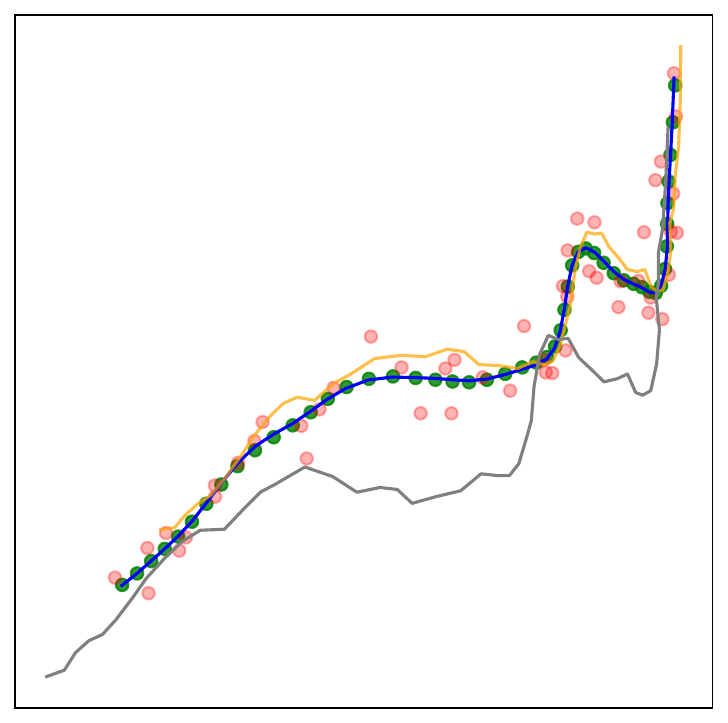}
  \includegraphics[width=0.23\linewidth]{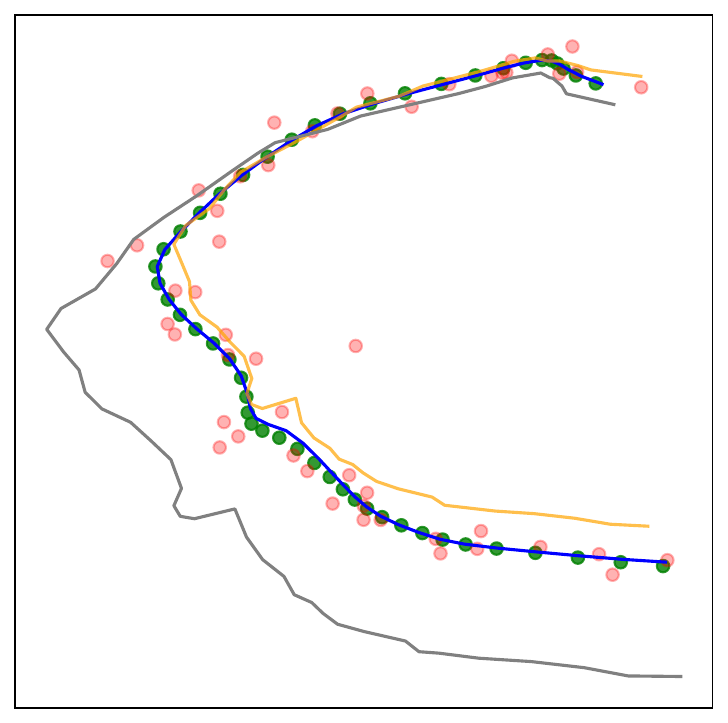}
  \raisebox{0pt}{\rule{4pt}{85pt}}
  \includegraphics[width=0.23\linewidth]{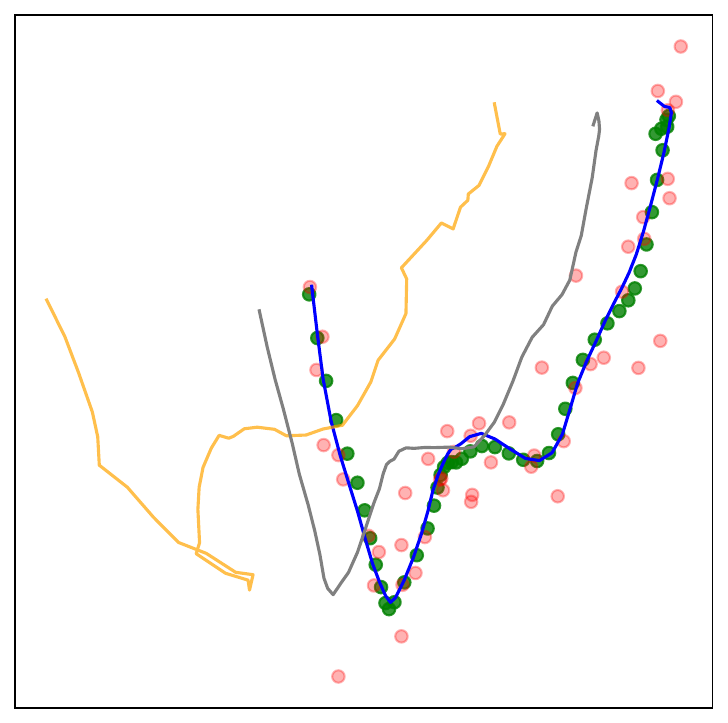}
  \includegraphics[width=0.23\linewidth]{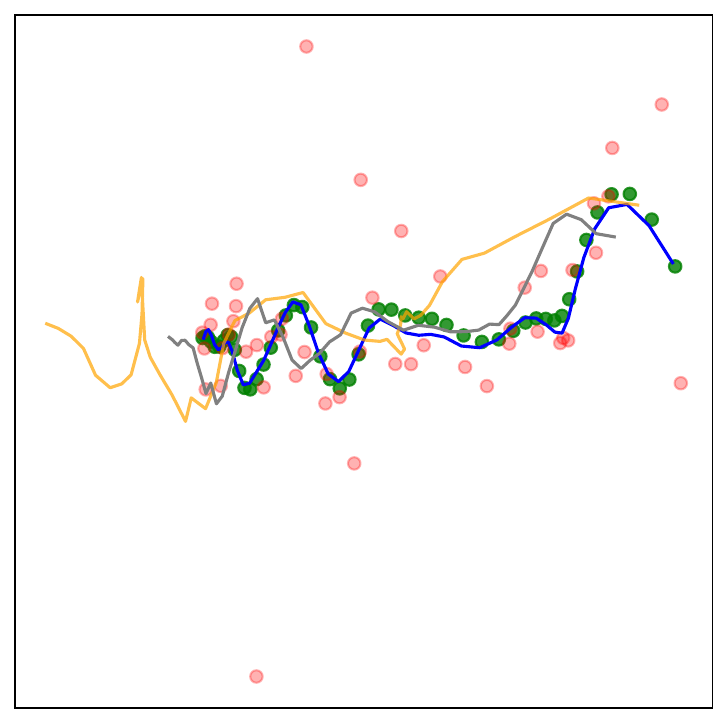}
    \rkfvisualslegend
    \caption{Robust Kalman filtering visualizations after $5$ iterations.
    The noisy trajectory is given by the pink dots and the optimal solution of the SOCP is given by the green dots.
    {\bf Left: two problem instances from in-distribution rollouts.}
    There is a clear hierarchy in performance: LAH Accel takes the lead, followed by LAH, then no learning.
    {\bf Right: two problem instances from out-of-distribution rollouts.}
    To generate these problems, we repeat the process, but initialize the new trajectory at $(100,0)$ instead of at the origin $(0,0)$.
    These problems are equivalent under a simple shift of coordinates.
    While the LAH Accel approach still performs the best, LAH is clearly worse than the vanilla approach. 
    }
    \label{fig:rkf_visuals}
\end{figure}

\section{Conclusion}\label{sec:conclusion}
We develop a framework to learn the acceleration hyperparameters for parametric convex optimization problems while certifying robustness.
In particular, we incorporate PEP for the hyperparameters to achieve a desired level of worst-case guarantee over all parameters of interest.
In our numerical examples, our approach dramatically improves the quality of the solution within a budget of iterations and guarantees worst-case performance for all parameters in a set. %

As a continuation in the line of work on learning to optimize \emph{only} a few algorithm hyperparameters~\citep{learn_algo_steps}, we expand upon the previous work by learning in addition the momentum terms and refining its computational benefit by introducing the time-invariant hyperparameters (for ADMM-based solvers). 
We further uncover an additional benefit of this approach---that it aligns well with the PEP analysis, which provides worst-case guarantees for given algorithms with arbitrary hyperparameters.

We see several avenues for future work.
First, for our acceleration algorithms, each iterate is a linear combination of the past two iterates; extending the look-back horizon could unlock even greater performance gains.
Second, since the size of the SDP grows with the number of iterations, it becomes computationally challenging to train for a large number of iterations.
Indeed, in our experiments, the largest budget we consider is $40$ steps.
It would be useful to scale the method to longer horizons.
Last, our worst-case guarantees derived from the PEP framework can be overly conservative.
Tighter guarantees may be achievable by combining this approach with the exact worst-case analysis of~\citet{ranjan2024exact}, which provides sharp bounds on performance.
\section*{Acknowledgements} NM is supported in part by NSF Award SLES-2331880, NSF CAREER award ECCS-2045834, and AFOSR Award FA9550-24-1-0102.

\bibliography{bibliographynourl}

\appendix
\section{Details of accelerated first-order methods from Table~\ref{table:fp_algorithms}}\label{sec:fom_details}

\myparagraph{Accelerated gradient descent} %
Here, $z \in \reals^n$ is the decision variable, and $f : \reals^\fplen \times \reals^d \rightarrow \reals$ is an $L$-smooth, convex objective function with respect to $z$.

\myparagraph{Accelerated proximal gradient descent} %
Here, $z \in \reals^n$ is the decision variable, $f : \reals^\fplen \times \reals^d \rightarrow \reals$ is an $L$-smooth, convex function with respect to $z$, and $g : \reals^\fplen \times \reals^d \rightarrow \reals$ is a non-smooth, convex function with respect to $z$.

\myparagraph{Accelerated ADMM}
Here, $w \in \reals^q$ is the decision variable, $f : \reals^q \times \reals^d \rightarrow \reals \cup \{+\infty\}$ is a closed, convex, and proper function with respect to $z$, and $g : \reals^q \times \reals^d \rightarrow \reals \cup \{+\infty\}$ is a non-smooth, but a closed, convex, and proper function with respect to $z$.
Exploiting the established equivalence between ADMM and Douglas-Rachford splitting~\citep{admm_dr_equiv}, we give the Douglas-Rachford iterations in Table~\ref{table:fp_algorithms}.

\myparagraph{Accelerated OSQP}
The problem data is $P \in \symm^p_+$, $A \in \reals^{m \times p}$, $c \in \reals^p$, $l \in \reals^m$, and $u \in \reals^m$.
The operator $\Pi_{[l,u]}(v)$ projects the vector $v$ onto the box $[l,u]$.
We split the vector $\rho \in \reals^m$ into $\rho = (\rho_{\rm eq} \mathbf{1}_{m_{\rm eq}}, \rho_{\rm ineq} \mathbf{1}_{m_{\rm ineq}})$ where $m_{\rm eq}$ is the number of constraints where $l = u$, and $m_{\rm eq}$ is the number of constraints where $l < u$.
The primal and dual solutions at the $k$-th step are given by $w^k$ and $y^k = \rho(v^k - \Pi_{[l,u]}(v^k))$.
The primal and dual residuals are $\|Aw^k - \Pi_{[l,u]}(v^k)\|_2$ and $\|Pw^k + A^T y^k + c\|_2$.

\myparagraph{Accelerated SCS}
The problem data is $P \in \symm^p_+$, $A \in \reals^{m \times p}$, $c \in \reals^p$, and $b \in \reals^m$.
The operator $\Pi_{\reals^p \times \mathcal{K}^*}(\xi)$ projects $\xi$ onto the cone $\reals^p \times \mathcal{K}^*$.
In SCS, positive scalings \(r_w\), \(r_{y_z}\), and \(r_{y_{\mathrm{nz}}}\) apply to the primal variable \(w\) and to the dual variable \(v\) associated with zero- and non-zero-cone constraints, whose counts are \(m_{\mathrm{z}}\) and \(m_{\mathrm{nz}}\).
The primal and dual residuals at the $k$-th iteration are $\|Aw^k + s^k - b\|_2$ and $\|Pw^k + A^T v^k + c\|_2$.
We do not implement the homogeneous self-dual embedding of SCS in~\citep{scs_quadratic} for simplicity.

\section{Operator theory definitions and proofs}\label{sec:op_theory_proof}

\subsection{Operator theory definitions}\label{sec:op_theory}

\begin{definition}[Nonexpansive operator]
  Assume that $R \succ 0$.
  An operator $T$ is nonexpansive with respect to $R$ if $\|Tx - Ty\|_R \leq \|x - y\|_R \quad \forall x, y \in \textbf{dom}(T)$.
\end{definition}

\begin{definition}[$\alpha$-averaged operator]
  Assume that $R \succ 0$.
  An operator $T$ is $\alpha$-averaged with respect to the metric $R$ for $\alpha \in [0, 1)$ if there exists a nonexpansive operator $S$ with respect to $R$ such that $T = (1 - \alpha) I + \alpha S$.
\end{definition}

\begin{definition}[Resolvent operator]
  The resolvent of operator $A$ is $J_A = (I + A)^{-1}$.
\end{definition}

\begin{definition}[Monotone operator]
  An operator $A$ is monotone in $\reals^n$ if $(v - v')^T(u-u') \geq 0$ for all possible $u,u' \in \textbf{dom}(A)$ and $v \in A(u)$ and $v' \in A(u')$.
\end{definition}

\begin{definition}[Maximal monotone operator]
  A monotone operator $A$ in $\reals^n$ is maximal monotone if there is no other monotone operator $B$ in $\reals^n$ such that (i) $A(u) \subseteq B(u)$ for all $u \in \reals^n$ and (ii) there exists at least one $u$ such that $B(u) \nsubseteq A(u)$.
\end{definition}

\subsection{Proof of Proposition~\ref{prop:admm_nonexp}}\label{proof:admm_nonexp}

We first introduce the following lemma.
\begin{lemma}\label{lemma:peaceman_rachford}
    Let $A$ and $B$ be maximal monotone operators and $R$ be a positive definite matrix.
    Then the operator $S = (2 J_{R^{-1} A} - I) \circ (2 J_{R^{-1} B} - I)$ is nonexpansive with respect to $R$.
\end{lemma}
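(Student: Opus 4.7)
The plan is to reduce the weighted-metric statement to the classical unweighted Peaceman--Rachford result by recasting everything in the inner-product space induced by $R$. Define the inner product $\langle u,v\rangle_R = u^T R v$ on $\reals^n$, whose induced norm is precisely $\|\cdot\|_R$. The strategy is to show that, under this inner product, $R^{-1}A$ and $R^{-1}B$ are maximal monotone, and then invoke the standard fact that the reflected resolvent of a maximal monotone operator is nonexpansive, and that compositions of nonexpansive operators remain nonexpansive.

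The first step is to verify monotonicity of $R^{-1}A$ with respect to $\langle\cdot,\cdot\rangle_R$. For $v \in A(u)$ and $v' \in A(u')$, observe
\begin{equation*}
\langle R^{-1}v - R^{-1}v', u - u'\rangle_R = (v - v')^T R^{-1} R (u - u') = (v-v')^T(u-u') \geq 0,
\end{equation*}
where the last inequality uses monotonicity of $A$. Maximality of $R^{-1}A$ in the $R$-inner-product space follows from Minty's theorem: one needs to show $\range(I + R^{-1}A) = \reals^n$, which is equivalent to $\range(R + A) = \reals^n$, and this in turn follows from maximality of $A$ (since $R \succ 0$, the map $u \mapsto Ru$ is a strongly monotone bijection that can be absorbed into $A$ via a standard perturbation argument). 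The same holds for $B$.

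Next, invoke the classical operator-theoretic fact that in any real Hilbert space, the resolvent $J_M = (I+M)^{-1}$ of a maximal monotone operator $M$ is firmly nonexpansive, and consequently its reflection $2J_M - I$ is nonexpansive. Applying this in the Hilbert space $(\reals^n, \langle\cdot,\cdot\rangle_R)$ with $M = R^{-1}A$ gives that $2 J_{R^{-1}A} - I$ is nonexpansive with respect to $\|\cdot\|_R$. The identical argument applies to $R^{-1}B$. Finally, since the composition of two $\|\cdot\|_R$-nonexpansive operators is $\|\cdot\|_R$-nonexpansive, we conclude that $S = (2J_{R^{-1}A} - I)\circ(2J_{R^{-1}B} - I)$ is nonexpansive with respect to $R$, as claimed.

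The only nontrivial step is the maximality transfer from $A$ to $R^{-1}A$ in the new inner product; everything else is a direct translation of the standard Peaceman--Rachford argument. If one prefers to avoid invoking Minty in the weighted space, an equivalent route is to diagonalize via $R = L L^T$ and conjugate by $L^T$: the operator $L^T J_{R^{-1}A} L^{-T}$ becomes the ordinary resolvent of the maximal monotone operator $L^{-1} A L^{-T}$ in the standard inner product, and the weighted nonexpansivity of $2J_{R^{-1}A}-I$ follows from the unweighted result by a change of variables. Either route makes the lemma an essentially routine consequence of classical material.
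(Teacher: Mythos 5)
Your proof is correct and follows essentially the same route as the paper's: both establish firm nonexpansiveness of $J_{R^{-1}A}$ in the $\|\cdot\|_R$ metric from monotonicity, reflect, and compose, with your version simply phrased as working in the Hilbert space $(\reals^n, \langle\cdot,\cdot\rangle_R)$ and citing the classical facts rather than redoing the two-line computation. If anything you are slightly more careful than the paper: you justify maximality of $R^{-1}A$ (hence that the resolvent is everywhere defined) via Minty's theorem, and you correctly invoke \emph{firm} nonexpansiveness for the reflection step, whereas the paper's closing remark that ``$2C - I$ is nonexpansive if $C$ is nonexpansive'' is literally false as stated and is only salvaged because firm nonexpansiveness was in fact what its computation established.
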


\begin{proof}
    We first prove that $J_{R^{-1} C}$ is nonexpansive in $R$ for some maximal monotone operator $C$.
    First, fix $z$ and $z'$ both in $\reals^n$ and let $u = J_{R^{-1} C}(z)$ and $u' = J_{R^{-1} C}(z')$.
    By definition of the Resolvent, we have $R(u - z) \in C u$ and $R(u' - z') \in C u'$.
    By monotonicity, it follows that $(u-u')^T (R(u-z) - R(u'-z')) \geq 0$.
    Simplifying leads to $\|u-u'\|_R^2 \leq (u-u')^T R (z-z')$.
    It follows that $\|u-u'\|_R \leq \|z-z'\|_R$ by Cauchy-Schwarz.
    The proof concludes by noting that $2C - I$ is nonexpansive if $C$ is nonexpansive and that the composition of nonexpansive operators is nonexpansive.
\end{proof}

\myparagraph{ADMM}
Both $A = \eta \partial f$ and $B=\eta \partial g$ are maximal monotone since they are the subdifferentials of closed, convex, and proper functions.
The result follows directly from Lemma~\ref{lemma:peaceman_rachford}, where nonexpansiveness is proven with respect to $R=I$.

\myparagraph{OSQP}
First, we define the indicator function $\mathcal{I}_S(x) = 0$ if $x \in S$ and $\infty$ otherwise for any set $S$.
The iterates in Table~\ref{table:fp_algorithms} are a special case of ADMM, where $f(w,v) = (1/2)w^T P w + c^T w + \mathcal{I}_{\{A w = v\}}(w,v)$ and $g(w,v) = \mathcal{I}_{[l,u]}(v)$ and scaling $R = \diag(\sigma \ones, \rho)$; this is a straightforward extension of~\citep[Fact~4.1]{infeas_detection}.
Using maximal monotone operators $A = \partial f$ and $B = \partial g$, the proof concludes by using Lemma~\ref{lemma:peaceman_rachford}.

\myparagraph{SCS}
Let $F(z) = Mz + q$.
Solving the conic problem amounts to finding $z$ such that $0 \in F(z) + \mathcal{N}_\mathcal{C}(z)$~\citep[\Sec~3]{scs_quadratic} where $\mathcal{N}_\mathcal{C}=\{v \mid \sup_{v' \in \mathcal{C}} (v' - v)^T z \leq 0\}$ is the normal cone of $\mathcal{C} = \reals^p \times \mathcal{K}^*$.
The operator $F(z)$ is maximal monotone since $M+M^T \succeq 0$~\citep[Example~2.2.3]{lscomo}.
The operator $\mathcal{N}_\mathcal{C}(z)$ is maximal monotone since $\mathcal{N}_\mathcal{C}(z) = \partial \mathcal{I}_{\mathcal{C}}(z)$~\citep[\Sec~2.2]{lscomo}.
The first iterate in Table~\ref{table:fp_algorithms} is equivalent to $\tilde{u}^{k+1} = J_{R^{-1}F}(z^k)=(R+F)^{-1}(R z^k) = (R+M)^{-1}(R (z^k-q))$.
We claim that the second iterate in Table~\ref{table:fp_algorithms} is equivalent to $\tilde{u}^{k+1} = J_{R^{-1}\mathcal{N}_{\mathcal{C}}}(2\tilde{u}^{k+1}-z^k)=J_{\mathcal{N}_{\mathcal{C}}}(2\tilde{u}^{k+1}-z^k)$---\ie, the matrix $R$ does not affect the projection step~\citep{scs_code}.
To see this, observe that $l = (R + \mathcal{N}_{\mathcal{C}})^{-1} R (v) \implies 0 \in \mathcal{N}_{\mathcal{C}} (l) + R(l - v) \implies l = \argmin_z \|z - v\|_R^2 \mbox{ s.t. } z \in \mathcal{C}$.
This feasible set can be written as $\reals^p \times \reals^{\rm z} \times \mathcal{K}^*_{\rm nz}$ (where $\mathcal{K}^*_{\rm nz}$ denotes the dual of the part of $\mathcal{K}$ that is the non-zero cone).
Since the diagonal matrix $R$ is constant within each of these three cases, and the projection is separable across all $3$, the dependence on $R$ disappears.
The proof concludes by applying Lemma~\ref{lemma:peaceman_rachford}.

\section{Details of SDP from Section \ref{sec:robustness}}

\subsection{SDP for accelerated proximal gradient descent}\label{appendix:sdp_pgd}

\ifpreprint
\paragraph*{Convex case $(\mathcal{G} = \mathcal{F}_{\mu, L})$.} 
\else
\paragraph*{Convex case $(\mathcal{G} = \mathcal{F}_{\mu, L})$} 
\fi
Here we present the details on deriving \eqref{prob:sdp_pgd} from \eqref{prob:pep_pgd}. Let $G = P^T P$, where
\[P = \begin{pmatrix}
    z^0 & z^{\star} & \nabla f(z^0) & \dots & \nabla f(z^K) & \nabla f(z^\star) & \partial g(z^0) & \dots & \partial g(z^K) & \partial g(z^\star)
\end{pmatrix}\,.\]
There exist vectors $\rho^0, \dots, \rho^K, \rho^\star$ (depending on $\theta$) and $\sigma^0, \dots, \sigma^K, \sigma^\star$, $\tau^0, \dots, \tau^K, \tau^\star$  such that $z^k = P\rho^k, \nabla f(z^k) = P\sigma^k, \partial g(z^k) = P\tau^k$ for all $k = 0, \dots, K, \star$.

Now define $f^k = f(z^k)$ and $g^k = g(z^k)$.  For $f \in \mathcal{F}_{\mu, L}$ and $g \in \mathcal{F}_{0, \infty}$, the corresponding interpolation inequalities \citep{taylor2017exactworst, pep2} are 
\begin{align*}
    f^j &\geq f^i + \Tr(G(\rho^j - \rho^i)\otimes \sigma^i)  \\
    &\quad + \frac{1}{2L} \Tr(G(\sigma^i - \sigma^j)^{\otimes 2}) + \frac{\mu}{2(1-\mu/L)}\Tr(G(\rho^i - \rho^j - \frac{1}{L}(\sigma^i - \sigma^j))^{\otimes 2}), \quad \forall i, j, \\
    g^j &\geq g^i + \Tr(G(\rho^j - \rho^i) \otimes \tau^i), \quad \forall i, j,
\end{align*}
where $a \otimes b$ denotes the outer product between vectors $a, b$ and $a^{\otimes 2} = a \otimes a$. These can be written as in \eqref{prob:sdp_pgd} for appropriate choices of $B^{ij}, C^{ij}$. 

The remaining constraints of \eqref{prob:pep_pgd} can be written as in \eqref{prob:sdp_pgd} by letting $A^0 = (e^1 - e^2)^{\otimes 2}$ and $A^* = (e^{K+4} + e^{2K+6})^{\otimes 2}$, where $e^i$ is the $i$-th standard basis vector. For the performance metric, for $r(z) = f(z) +g(z)-f(z^\star) - g(z^\star)$ we let $U = 0, v^i = w^i = \bm{1}\{i = K\} - \bm{1}\{i = \star\}$; for $r(z) = \|z - z^\star \|^2$ we let $U = (\rho^K - \rho^\star)^{\otimes 2}, v^i = w^i \equiv 0$. These respectively correspond to Cases 1 and 2 in Assumption \ref{assumption:proxgd}.

\ifpreprint
\paragraph*{Quadratic case $(\mathcal{G} = \mathcal{Q}_{\mu, L})$.} 
\else
\paragraph*{Quadratic case $(\mathcal{G} = \mathcal{Q}_{\mu, L})$} 
\fi
First, we write the SDP formulation in this case:
\begin{equation}\label{prob:sdp_pgd_quad}
  \begin{array}{lll}
\mbox{maximize} & \mbox{(performance metric)} & \Tr(G U) \\
  \mbox{subject to} & \mbox{(initial point)} & \Tr(G A^0) \leq 1 \\
  & \mbox{(optimality)} & \Tr(GA^\star) = 0 \\
  & \mbox{(algorithm update} & B^T G B' = (B')^T G B, C^T G C' \succeq 0 \\
  & \mbox{ + function class)} & g^j \geq g^i + \Tr(G D^{ij}),\quad \forall i, j \\
  & \mbox{(Gram matrix)} & G \succeq 0.
  \end{array}
\end{equation}

For derivation, we start by assuming that (by adding linear terms to $g$ if necessary) that $f$ is homogeneous quadratic. Using the same notations of $G, P, \rho^k, \sigma^k, \tau^k$ as in the previous case, we further denote $\rho = \begin{pmatrix} \rho^0 & \dots & \rho^K & \rho^\star \end{pmatrix}$ and $\sigma = \begin{pmatrix} \sigma^0 & \dots & \sigma^K & \sigma^\star \end{pmatrix}$ so that $P\rho = \begin{pmatrix}
    z^0 & \dots & z^K & z^\star
\end{pmatrix}$ and $P\sigma = \begin{pmatrix}
    \nabla f(z^0) & \dots & \nabla f(z^K) & \nabla f(z^\star)
\end{pmatrix}$.

Then the corresponding interpolation inequalities \citep{pep2, bousselmi2024interpolation} are $\rho^T G \sigma = \sigma^T G \rho, (\sigma - \mu \rho)^T G (L\rho - \sigma) \succeq 0, g^j \geq g^i + \Tr(G(\rho^j - \rho^i) \otimes \tau^i) \ \forall i, j$,
which specify $B, B', C, C', D^{ij}$. The constraints on initial point and optimality follow from those as in $\mathcal{G} = \mathcal{F}_{\mu, L}$ (\ie, the same choices of $A^0$ and $A^\star$). 
Finally, for the performance metric, $U = (\rho^K - \rho^\star)^{\otimes 2}$ (recall that this is for Case 3 in Assumption \ref{assumption:proxgd}).

\subsection{SDP for accelerated ADMM}\label{appendix:sdp_admm}
In this subsection, we derive \eqref{prob:sdp_admm} from \eqref{prob:pep_admm}. Similar to Appendix Subsection \ref{appendix:sdp_pgd}, we let $G = P^TP$ where %
\[P = \begin{pmatrix}
    R^{1/2}z^0 & R^{1/2}z^{\star} & R^{1/2}S(z^0) & \dots & R^{1/2}S(z^K) & R^{1/2}S(z^\star)
\end{pmatrix}\,.\]
Then there exist vectors $\rho^k, \sigma^k$ for $k=0,\dots,K,\star$ (depending on $\theta$, distinct from those in the last section) such that $R^{1/2}z^k = P\rho^k, R^{1/2}S(z^k) = P\sigma^k$.
The corresponding interpolation inequalities of the nonexpansiveness of $S$ with respect to $R$~\citep{ryu_ospep} are $\Tr(G (\sigma^i - \sigma^j)^{\otimes 2}) \leq \Tr(G (\rho^i - \rho^j)^{\otimes 2}) \ \forall i, j$,
which can be written as in \eqref{prob:sdp_admm} with $B^{ij} = (\sigma^i - \sigma^j)^{\otimes 2} - (\rho^i - \rho^j)^{\otimes 2}$. For the remaining constraints of \eqref{prob:pep_admm}, we take $A^0 = (e^1 - e^2)^{\otimes 2}$ and $A^\star = (e^2 - e^{K+4})^{\otimes 2}$ in \eqref{prob:sdp_admm}. Also, $U = (\rho^K - \sigma^K)^{\otimes 2}$.

\end{document}